\let\classAND\AND
\let\AND\relax
\let\AND\classAND
\newenvironment{proof}{\textbf{Proof:} }{\hfill$\square$}
\newcommand{\mb}[1]{\mathbf{#1}}
\newcommand{\mbb}[1]{\mathbb{#1}}
\newcommand{\td}[1]{\tilde{#1}}
\newcommand{\mc}[1]{\mathcal{#1}}
\newtheorem{lemma}{Lemma}
\newtheorem{theorem}{Theorem}
\newtheorem{example}{Example}
\newtheorem{assumption}{Assumption}
\newtheorem{corollary}{Corollary}
\begin{document}

\begin{frontmatter}
\title{Distributed Safe Resource Allocation using Barrier Functions\thanksref{footnoteinfo}}

\thanks[footnoteinfo]{This work was supported in part by the funding from Digital Futures and in part by the Swedish Research Council (Vetenskapsr\r{a}det) under grant 2020-03607. Corresponding author: Xuyang Wu.}

\author[kth]{Xuyang Wu}\ead{xuyangw@kth.se},
\author[stockholm]{Sindri Magn\'{u}sson}\ead{sindri.magnusson@dsv.su.se},
\author[kth]{Mikael Johansson}\ead{mikaelj@kth.se}
\address[kth]{School of Electrical Engineering and Computer Science, KTH Royal Institute of Technology, Sweden}
\address[stockholm]{Department of Computer and System Science, Stockholm University, Sweden}

\begin{keyword}
safe resource allocation, feasible method, distributed optimization, barrier function, safe optimization.
\end{keyword}

\begin{abstract}
Resource allocation plays a central role in many networked systems such as smart grids, communication networks and urban transportation systems. In these systems, many constraints have physical meaning and 
having feasible allocation 
is often vital to avoid system breakdown.
Hence, algorithms with asymptotic feasibility guarantees are often insufficient since it is impractical to run algorithms for an infinite number of rounds. This paper proposes a distributed feasible method (DFM) for safe resource allocation based on barrier functions. In DFM, every iterate is feasible and thus safe to implement. We prove that under mild conditions, DFM converges to an arbitrarily small neighbourhood of the optimal solution. Numerical experiments demonstrate the competitive performance of DFM.
\end{abstract}
\end{frontmatter}

\setlength{\abovedisplayskip}{3pt}
\setlength{\belowdisplayskip}{3pt}

\section{Introduction}

Resource allocation among cooperative agents in networked systems is a central problem in water distribution networks, urban transportation systems, and power networks. In these systems, an allocation is safe only when it is feasible, and violation of physical constraints may cause system breakdown. Consequently, it is important that resource allocation mechanisms always generate feasible allocations. 
%
%
\label{ssec:ME} To make these challenges more concrete, we consider three resource allocation problems from power networks and communications that currently lack distributed solutions.
 
\textbf{Economic dispatch \cite{Yang13}}: In a smart grid where some users can generate power and some users have demands, the users wish to cooperatively find the optimal power allocations:
\begin{equation}\label{eq:ED}
\begin{aligned}
& \underset{x_i\in \mathbb{R},i\in\mathcal{V}}{\text{minimize}}
& & \sum_{i\in\mathcal{V}} f_i(x) \\
& \text{subject to} 
& & \sum_{i\in\mathcal{V}} x_i = c,\\
&&& x_i\in \mathcal{X}_i,~~i \in \mathcal{V}
\end{aligned}
\end{equation}
where $x_i\in \mathbb{R}$ is the power injection of node $i\in\mathcal{V}$ (negative injection $x_i<0$ means that node $i$ consumes power from the grid). The cost function $f_i$ is the generation cost or the disutility related to shifting power demands for node $i$, and $c\in \mathbb{R}$ is the power injection not accounted for by the nodes in $\mathcal{V}$.  
The coupling constraint ensures that the supply meets the demand, which is a hard physical constraint in power systems. 
The local constraints $\mathcal{X}_i$ are important, as they specify hard constraints on devices or user preferences.

\textbf{Multiple resources \cite{Enyioha18}}: In smart grids, users might get power from different sources, e.g., renewable or coal, with different preference and the goal is to minimize user's total disutility. To model such scenarios, we need to consider multiple coupling constraints, e.g., 

\begin{equation}\label{eq:problemmultiple}
\begin{aligned}
& \underset{x_i\in \mathbb{R}^2, i\in\mathcal{V}}{\text{minimize}}
& & \sum_{i\in\mathcal{V}} f_i(x_i^{\text{renew}},x_i^{\text{coal}})\\
& \text{subject to}
& & \sum_{i\in\mathcal{V}} x_i^{\text{renew}} = 0,
~~\sum_{i\in\mathcal{V}} x_i^{\text{coal}} = 0,\\
&&& x_i\in \mathcal{X}_i,~i\in\mathcal{V}.
\end{aligned}
\end{equation}
Here, the local objective functions encode user preferences for different energy sources. 
This problem is similar to economic dispatch, but is more challenging to solve. In fact, as we will review in Section~\ref{ssec:literature}, most existing feasible methods can only handle a single coupling constraint.

\textbf{Rate control in networks \cite{kelly98}}: Consider a network with several transmitters where each user $i$ transmits along a single route. The goal is to find the end-to-end transmission rates $x_i$ that maximize the total user utility 
\begin{equation}\label{eq:ratecontrol}
\begin{split}
    \underset{x_i\in\mathbb{R},~i\in\mathcal{V}}{\operatorname{maximize}} ~~&~ \sum_{i\in\mathcal{V}} U_i(x_i)\\
    \operatorname{subject~to} ~&~\sum_{i\in \mc{T}_\ell} x_i \le c_\ell,\quad\forall \ell\in\mathcal{L},\\
    &~ x_i\ge 0,~\forall i\in\mathcal{V}.
\end{split}
\end{equation}
Here, $\mathcal{V}$ is the set of transmitters (users), $x_i$ and $U_i$ are the transmission rate and the utility function of user $i\in\mathcal{V}$, respectively, $\mathcal{L}$ is the set of communication links, and $c_\ell$ is the capacity of link $\ell\in\mathcal{L}$. Moreover, $\mc{T}_\ell$ is the set of transmitters whose data flow is routed across link $\ell$.


\subsection{Literature Review}\label{ssec:literature}

In the past few decades, many distributed algorithms \cite{Aybat16,Nedic17,Chang16,wu2021distributed,Lee05,Tan22,XiaoL06b,Necoara13,Ghadimi13,Ho80,Magnusson16,Magnusson18} have been proposed to solve resource allocation problems. Most of these methods have only asymptotic feasibility guarantees, i.e., they ensure that the limit of iterates is feasible. This is insufficient because in any practical implementation, only a finite number of iterations can be executed. Moreover, unlike centralized environments, where an infeasible point can be projected onto the feasible region, such projections are rarely implementable in networked systems since the constraints are often global while only local communication between neighbors is allowed.

To solve resource allocation problems in a safe way, a number of distributed feasible methods~\cite{Tan22,XiaoL06b,Necoara13,Ghadimi13,Ho80,Magnusson16,Magnusson18} have been proposed. These methods guarantee feasibility of all iterates. Hence, whenever such an algorithm is stopped, the iterate is safe to implement. Among these methods, \cite{Tan22,XiaoL06b,Necoara13,Ghadimi13} guarantee feasibility of all iterates, but do not allow for local constraints. However, as we have shown by the introductory examples above, resource allocation problems in physical systems often have local constraints.

The most closely related works in the literature are \cite{Ho80,Magnusson16,Magnusson18}, which allow for local constraints and yield feasible iterates at every iteration. Among them, \cite{Ho80} addresses problems with one global linear equality constraint while \cite{Magnusson16,Magnusson18} consider problems with one global linear inequality constraint. However, these algorithms have serious limitations. They all assume one-dimensional local decision variables, \cite{Ho80} requires the local constraints to be the set of non-negative real numbers, and \cite{Magnusson16,Magnusson18} rely on star networks. Due to these limitations, none of the approaches in ~\cite{Ho80,Magnusson16,Magnusson18} can solve the three motivating examples in Section \ref{ssec:ME}.

\subsection{Contribution}

Motivated by the lack of distributed feasible methods for solving resource allocation problems with local constraints on general networks, we propose a class of distributed feasible methods (DFM) based on barrier functions. We also propose a reachability condition, which is necessary for a general class of distributed feasible algorithms including DFM and those in \cite{XiaoL06b,Necoara13,Ghadimi13,Ho80} to converge to the optimum from an arbitrary initial allocation. Compared to existing feasible methods \cite{Ho80,Magnusson16,Magnusson18} that allow for local constraints, DFM can handle a broader range of problems and networks. Specifically, DFM allows for
\begin{enumerate}
    \item multi-dimensional variables, multiple coupling constraints, and general undirected networks, while \cite{Ho80,Magnusson16,Magnusson18} only consider scalar variables and one coupling constraints, and \cite{Magnusson16,Magnusson18} rely on star networks.
    \item non-convex objective functions, while the objective functions are required to be convex in \cite{Ho80,Magnusson16} and strongly convex in \cite{Magnusson18}.
\end{enumerate}
With the above features, DFM can solve all the three motivating examples in Section \ref{ssec:ME}.

A preliminary conference version of this work can be found in \cite{Wu21}, which also transforms the problem using barrier functions. However, both the algorithm and the  convergence results  in \cite{Wu21} are different from this article. In particular, the algorithm in \cite{Wu21} is a random algorithm, it requires nodes to share their local cost function with neighbors which may cause privacy issues, and it is only proved to converge asymptotically in expectation on convex problems. In contrast, DFM is deterministic, has \emph{deterministic convergence} with \emph{non-asymptotic} rates, and can solve \emph{non-convex} problems. In addition, DFM needs no sharing of local cost functions between nodes and \emph{avoids the related privacy issues}.

\subsection*{Paper Organization and Notation}

The outline of this paper is as follows: Section \ref{sec:prob} formulates the problem, clarifies the challenges, and transforms the problem using barrier functions. Section \ref{sec:alg} develops DFM to solve the transformed problem and Section \ref{sec:convana} analyses its convergence properties. Subsequently, Section \ref{sec:numerical} evaluates the practical performance of DFM in numerical experiments. Finally, Section \ref{sec:conclusion} concludes the paper.

\noindent\emph{Notation.} 
For any set $\mathcal{S}\subseteq \mathbb{R}^n$, $|\mathcal{S}|$ represents its cardinality. We define $\|\cdot\|$ as the $\ell_2$ vector norm and $\otimes$ as the Kronecker product. In addition, $\mathbf{0}_m, \mathbf{1}_m$, and $I_m$ are the $m$-dimensional all-zero vector, the $m$-dimensional all-one vector, and the $m\times m$ identity matrix, respectively; we ignore their subscripts when the dimension is clear from context. 
For any $x\in\mathbb{R}^n$ and positive semidefinite matrix $W$, we define $\|x\|_W=\sqrt{x^TWx}$. A function $f:\mathbb{R}^d\rightarrow \mathbb{R}$, it is $L$-smooth if $f$ is differentiable on $\mathbb{R}^d$ and its gradient $\nabla f$ is Lipschitz continuous with Lipschitz constant $L$, i.e., $\|\nabla f(x)-\nabla f(y)\|\le L\|x-y\|$ $\forall x,y\in \mathbb{R}^d$;  it is $\sigma-$strongly convex for some $\sigma>0$ if $f(y)-f(x)\ge \langle g_x, y-x\rangle+\frac{\sigma}{2}\|x-y\|^2$ for all $x,y\in \mathbb{R}^d$ and $g_x\in\partial f(x)$. Restrictions on the information exchange among agents will be described by an undirected graph $\mc{G}=(\mc{V}, \mc{E})$ with node set $\mc{V}$ and edge set $\mc{E}$. For each node $i\in\mc{V}$, we define $\mathcal{N}_i = \{j:\{i,j\}\in\mathcal{E}\}$ as its neighbor set and $\bar{\mathcal{N}}_i = \mathcal{N}_i\cup\{i\}$.

\section{Problem, Challenges, and Solution}\label{sec:prob}

\subsection{Problem Formulation}
Consider a network of $n$ nodes described by an undirected graph $\mathcal{G}=(\mathcal{V}, \mathcal{E})$, where $\mathcal{V}=\{1,\ldots,n\}$ is the set of vertices (representing nodes) and $\mathcal{E}\subseteq \mc{V}\times\mc{V}$ is the set of edges (representing links). Each node $i\in\mathcal{V}$ observes a local constraint set $\mc{X}_i\subset\mathbb{R}^{d_i}$, a local cost function $f_i:\mbb{R}^{d_i}\rightarrow \mathbb{R}$, and a local constraint matrix $A_i\in\mathbb{R}^{m\times d_i}$. The nodes aim at cooperatively finding the optimal resource allocation:
\begin{equation}\label{eq:prob}
	\begin{split}
		\underset{x_i\in \mc{X}_i, i\in\mathcal{V}}{\operatorname{minimize}}~~&\sum_{i\in\mathcal{V}} f_i(x_i)\\
		\operatorname{subject~to}~&\sum_{i\in\mathcal{V}} A_ix_i = c.
	\end{split}
\end{equation}
Note that~\eqref{eq:prob} also allows for coupling inequality constraints in the form of $\sum_{i\in\mc{V}} A_ix_i\le c$. These are simply handled by introducing local variables $y_i$ and local constraints on the form $A_ix_i\le y_i$ for all nodes $i\in {\mathcal V}$, in combination with the global constraint 
$\sum_{i\in\mc{V}} y_i=c$.  
For example, the coupling constraint in \eqref{eq:ratecontrol} can be re-written as
\begin{align}
x_i &\le y_{i\ell},\quad\forall \ell\in\mathcal{L}, \forall i\in \mc{T}_\ell,\\
\sum_{i\in \mc{T}_\ell} y_{i\ell} &= c_\ell,\quad\;\forall \ell\in\mathcal{L}.\label{eq:couplingconstraintcommunication}
\end{align}

For the theoretical analysis, we impose the following assumptions on problem \eqref{eq:prob}.
\begin{assumption}\label{asm:prob}
	The following conditions hold:
	\begin{enumerate}[(a)]
		\item Each $f_i$ is $L_i$-smooth for some $L_i>0$.
		\item Each $\mc{X}_i=\{x\in\mathbb{R}^{d_i}: g_i^j(x)\le 0,~j=1,\ldots,q_i\}$ for some convex and $\beta$-smooth functions $g_i^j:\mathbb{R}^{d_i}\rightarrow \mathbb{R}$. Moreover, each $g_i^j$ is $\beta_1$-Lipschitz continuous on $\mathcal{X}_i$.
		\item There exists $\{\td{x}_i\}_{i\in\mathcal{V}}$ such that $\sum_{i\in\mathcal{V}} A_i\td{x}_i = c$ and $g_i^j(\td{x}_i)<0$ for all $i\in\mathcal{V}$ and $j=1,\ldots,q_i$.
		\item The optimal value $f^\star$ of problem \eqref{eq:prob} is bounded from below, i.e., $f^\star>-\infty$.
	\end{enumerate}
\end{assumption}
Assumptions \ref{asm:prob}(a) and (d) are standard in optimization, especially when each $f_i$ is non-convex. Assumptions \ref{asm:prob}(b)--(c) enable the use of barrier functions for the problem transformation in Section \ref{ssec:problemtrans}.

\subsection{Challenges}\label{ssec:challenge}


Solving \eqref{eq:prob} with non-asymptotic feasibility guarantee in a distributed way is challenging and primal-dual methods generally can only ensure asymptotic feasibility. Existing distributed primal feasible methods for solving \eqref{eq:prob} include the weighted gradient method in \cite{Ho80,XiaoL06b}, the multi-step weighted gradient method in \cite{Ghadimi13}, and the coordinate-wise weighted gradient method in \cite{Necoara13}. However, none of them can handle general $A_i$'s or general convex $\mc{X}_i$'s. Specifically, \cite{Ho80,XiaoL06b,Ghadimi13} assume each $A_i=I$, \cite{Necoara13} requires each $A_i$ to be a non-zero scalar, \cite{Ho80} needs each $\mc{X}_i$ to be the set of non-negative scalars, and \cite{XiaoL06b,Ghadimi13,Necoara13} do not allow for local constraint sets. In this subsection, we will show by simple examples that a straightforward extension of the methods in \cite{Ho80,XiaoL06b,Ghadimi13} to handling general $A_i$'s or local constraint sets may fail even if 
$\mc{G}$ is connected. The multi-step weighted gradient method \cite{Ghadimi13} is more complicated and is not analysed here for simplicity.

\textbf{Extension to handle general $A_i$'s}: When $A_i=1$ for all $i\in {\mathcal V}$, the methods in \cite{Ho80,XiaoL06b,Necoara13} can be described as follows. Start from a feasible solution $\{x_i^0\}_{i\in\mc{V}}$. At each iteration, for an edge set $\mc{E}^k\subseteq\mc{E}$, find $p_{ij}^k$ $\forall \{i,j\}\in \mc{E}^k$ by solving
\begin{equation}\label{eq:pupdateoldlit}
\begin{split}
\operatorname{minimize}~~&~ f_i^k(x_i^k+p_{ji})+f_j^k(x_j^k+p_{ij})\\
\operatorname{subject~to}~&~p_{ji}+p_{ij} = 0,
\end{split}
\end{equation}
where $f_i^k(x_i)=f_i(x_i^k)+\langle \nabla f_i(x_i^k), x_i-x_i^k\rangle+\frac{L_i}{2}\|x_i-x_i^k\|^2$ is a quadratic surrogate function for $f_i$ around iterate $x_i^k$. The decision variable $p_{ij}$ describes the optimal re-allocation of resources from node $i$ to node $j$ (by the optimality conditions for~\eqref{eq:pupdateoldlit}, this quantity is proportional to the difference in the gradients of the loss functions of the nodes at their current iterates). For $\{i,j\}\in \mc{E} \setminus \mc{E}^k$, $p_{ij}^k=\mb{0}$. Then, the resource allocation is updated as
\begin{equation}\label{eq:xupdatelit}
    x_i^{k+1} = x_i^k + \sum_{j\in\mc{N}_i} w_{ij}p_{ij}^k,~\forall i\in\mc{V}.
\end{equation}
where $w_{ij}=w_{ji}>0$ are scalar weights. In the weighted gradient descent method \cite{Ho80,XiaoL06b}, $\mc{E}^k=\mc{E}$ and in the coordinate-wise weighted gradient method \cite{Necoara13},  $\mc{E}^k$ is randomly sampled from $\mc{E}$. Note that by~\eqref{eq:xupdatelit}, 
%
\begin{equation}\label{eq:feasiblesucc}
    \sum_{i\in\mc{V}} x_i^{k+1}-\sum_{i\in\mc{V}}x_i^k=\!\sum_{\{i,j\}\in\mc{E}}\!\!\!w_{ij}(p_{ij}^k+p_{ji}^k) = \mathbf{0}.
\end{equation}
Hence, if $\{ x_i^0\}_{i\in {\mathcal V}}$ satisfies the coupling constraint, then so will $\{ x_i^k \}_{i\in {\mathcal V}}$ for all future iterations $k\geq 0$.

For general $A_i$'s, it is natural to find a feasible resource-reallocation by replacing \eqref{eq:pupdateoldlit} by
\begin{equation}\label{eq:pupdatelit}
\begin{split}
\operatorname{minimize}~~&~ f_i^k(x_i^k+p_{ji})+f_j^k(x_j^k+p_{ij})\\
\operatorname{subject~to}~&~A_ip_{ji}+A_jp_{ij} = \mathbf{0}.
\end{split}
\end{equation}
Using a similar argument as above, the resource updates $p_{ij}$ and $p_{ji}$ ensure that future iterates $\{x_i^k\}_{i\in\mc{V}}$ satisfy the global coupling constraint by \eqref{eq:feasiblesucc}. However, as the next example shows, this algorithm could fail to converge to the optimum.
\begin{example}\label{ex:smooth}
Consider the line graph with four nodes:
\begin{figure}[!htb]
	\centering
	\includegraphics[scale=0.7]{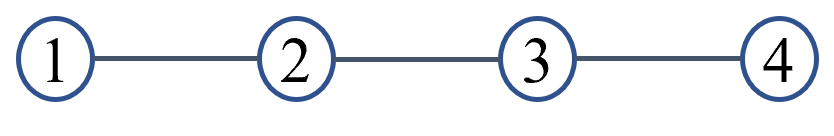}
\end{figure}
\vspace{-0.4cm}

The problem is
\begin{equation}\label{eq:counterexample}
	\begin{split}
		\underset{x_i\in\mbb{R},~i=1,\ldots,4}{\operatorname{minimize}}~&~ \sum_{i=1}^4 f_i(x_i)=\frac{1}{2}(x_i-\theta_i)^2\\
		\operatorname{subject~to}~&~x_1 + x_4 = 1,
	\end{split}
\end{equation}
where $(\theta_1,\theta_2,\theta_3,\theta_4) = (1,0,0,1)$. This is an example of \eqref{eq:prob} without local constraints. For this example, the point $(x_1', x_2', x_3', x_4')=(0,0,0,1)$ is a fixed point of the update \eqref{eq:xupdatelit} with $p_{ij}^k$, $\{i,j\}\in\mc{E}^k$ computing from \eqref{eq:pupdatelit}. To see this, note that by letting $x_i^k=x_i'$ $\forall i\in\mc{V}$, the solution to \eqref{eq:pupdatelit} is $p_{ij}^k=p_{ji}^k=\mb{0}$ for any $\{i,j\}\in\mc{E}$. However, the unique optimal solution of \eqref{eq:counterexample} is $(1/2, 0, 0, 1/2)$.
\end{example}
In Section \ref{sssec:reachable} below, we will analyse the issue in Example~\ref{ex:smooth}  and propose a reachability condition to handle it.

\textbf{Extension to handle general local constraint sets}: A straightforward extension of the above method to \eqref{eq:prob} with general local constraint sets is to add the constraints $x_i^k+p_{ji}\in \mc{X}_i$ and $x_j^k+p_{ij}\in \mc{X}_j$ to problem \eqref{eq:pupdatelit}, leading to
\begin{equation}\label{eq:pupdatelitextension}
\begin{split}
\operatorname{minimize}~~&~ f_i^k(x_i^k+p_{ji})+f_j^k(x_j^k+p_{ij})\\
\operatorname{subject~to}~&~A_ip_{ji}+A_jp_{ij} = \mathbf{0},\\
&~x_i^k+p_{ji}\in \mc{X}_i,~x_j^k+p_{ij}\in \mc{X}_j.
\end{split}
\end{equation}
The iterates $p_{ij}^k$ $\forall \{i,j\}\in \mc{E}^k$ are obtained by solving the above problem and the remaining parts are the same as earlier, i.e., $p_{ij}^k=p_{ji}^k=\mb{0}$ for $\{i,j\}\in\mc{E}\setminus \mc{E}^k$ and $\{x_i^{k+1}\}_{i\in\mc{V}}$ is updated by \eqref{eq:xupdatelit}. We additionally require $\sum_{j\in\mc{N}_i} w_{ij}\le 1$ for all $i\in\mc{V}$. Then, $x_i^{k+1}$ is a convex combination of $x_i^k$ and $x_i^k+p_{ji}^k$, $\forall j\in\mc{N}_i$, which all belong to $\mc{X}_i$. Hence, $x_i^{k+1}\in\mc{X}_i$ $\forall i\in\mc{V}$. By similar derivation in \eqref{eq:feasiblesucc}, $\{x_i^{k+1}\}_{i\in\mc{V}}$ is also feasible to the coupling constraint. Therefore, all iterates are feasible to \eqref{eq:prob}.

Below, we show that even if each $A_i=1$ which satisfies the conditions in \cite{Ho80,XiaoL06b,Necoara13}, the above method may not converge to the optimum.
\begin{example}\label{ex:counterconstraints}
Consider the same graph in Example \ref{ex:smooth}, the same objective function in \eqref{eq:counterexample}, and the constraints
\begin{equation*}
	\begin{split}
		\sum_{i=1}^4 x_i=1, x_i\in [0,1],~\forall i\in\{1,\ldots,4\}.
	\end{split}
\end{equation*}
For this example, the point $(x_1', x_2', x_3', x_4')=(0,0,0,1)$ is a fixed point of the extended method because by letting $x_i^k=x_i'$ $\forall i\in\mc{V}$, for any $\{i,j\}\in\mc{E}$, the solution to \eqref{eq:pupdatelitextension} is $p_{ij}^k=p_{ji}^k=\mb{0}$. However, the unique optimal solution is $(1/2, 0, 0, 1/2)$.
\end{example}
The failure in Example \ref{ex:counterconstraints} will be addressed in Section \ref{ssec:problemtrans} using barrier functions.

\subsection{Solution to the Challenges}

\subsubsection{Reachability of a general class of feasible methods}\label{sssec:reachable}

This subsection proposes a reachability condition. It enables a general class of distributed feasible methods to explore the whole feasible set, which is often necessary for guaranteeing optimality. The condition is also helpful for understanding and circumventing the failure in Example \ref{ex:smooth}. For convenience, let $\mb{x}^k=[(x_1^k)^T, \ldots, (x_n^k)^T]^T$ for all $k\ge 0$ and $A=[A_1, \ldots, A_n]$.

\textbf{General update scheme}: Start from a feasible iterate $\mb{x}^0$ of \eqref{eq:prob}. For any $k\ge 0$, update the iterate vector according to
\begin{equation}\label{eq:aggregate}
    \mb{x}^{k+1} = \mb{x}^k + \sum_{i\in\mc{V}} \mb{u}_i^k,
\end{equation}
where $\mb{u}_i^k=[(u_{i1}^k)^T, \ldots, (u_{in}^k)^T]^T$ and $u_{ij}^k\in\mbb{R}^{d_j}$ is the change that node $i$ makes on $x_j^k$, $j\in\mc{V}$. We assume that
\begin{equation*}
    \mb{u}_i^k\in\mc{S}_i = \{\mb{u}_i: u_{ij} = \mathbf{0}~\forall j\notin \bar{\mc{N}}_i,~\sum_{j\in\bar{\mc{N}}_i} A_ju_{ij} = \mathbf{0}\},
\end{equation*}
i.e., each node can only affect its own iterate and the iterates of its neighbors. When nodes only have information of themselves and their neighbors, this is a natural way to preserve feasibility of the coupling constraint. The extended method in Section \ref{ssec:challenge} for handling general $A_i$'s and the feasible methods in \cite{Ho80,XiaoL06b,Ghadimi13,Necoara13} all belong to this scheme.

Below we introduce the reachability condition. Note that by \eqref{eq:aggregate} and $\mb{u}_i^k\in\mc{S}_i$, $\mb{x}^{k+1}-\mb{x}^k\in \mc{S}_1+\ldots+\mc{S}_n$. Moreover, $\mc{S}_i\subseteq \operatorname{Null}(A)$ for all $i\in\mc{V}$. Hence, with the update \eqref{eq:aggregate}, \emph{the optimal solution $\mathbf{x}^\star$ is reachable from any $\mathbf{x}^0\in\operatorname{Null}(A)$ only if the following assumption holds}.
\begin{assumption}[Reachability]\label{asm:reachable}
It holds that
\begin{equation}\label{eq:identicalset}
    \mathcal{S}_1 + \ldots + \mathcal{S}_n = \operatorname{Null}(A).
\end{equation}
\end{assumption}
This assumption involves both the network $\mc{G}$ and the constraint matrix $A$. Under Assumption \ref{asm:reachable}, the update \eqref{eq:aggregate} can explore the whole set $\{\mb{x}: A\mb{x}=c\}$ while preserving feasibility of all iterates to the coupling constraint. However, if Assumption \ref{asm:reachable} fails to hold, the scheme is unable to reach $\mb{x}^{\star}$ from any initial allocation $\mb{x}^0$ that satisfies $\mb{x}^\star-\mb{x}^0\in \operatorname{Null}(A)\setminus (\mc{S}_1+\ldots+\mc{S}_n)$.

It can be verified that Example \ref{ex:smooth} fails to satisfy Assumption \ref{asm:reachable}. However, by simply adding the link $\{1,4\}$ to $\mc{G}$, Assumption \ref{asm:reachable} holds. Moreover, by the following Lemma, Assumption \ref{asm:reachable} is satisfied by the coupling constraint in problems \eqref{eq:ED}-\eqref{eq:problemmultiple} and the coupling constraint \eqref{eq:couplingconstraintcommunication}, which correspond to the three motivating examples in Section \ref{ssec:ME}.

\begin{lemma}\label{lemma:reachable}
    If all the $A_i$'s have full row rank and $\mc{G}$ is connected, then Assumption \ref{asm:reachable} holds. Moreover, if we define transmitters who share the same transmission link as neighbors in problem \eqref{eq:ratecontrol}, i.e., $\mathcal{E}=\{\{i,j\}:~i,j\in \mc{T}_\ell\text{ for some }\ell\in\mc{L}\}$, then \eqref{eq:couplingconstraintcommunication} satisfies Assumption \ref{asm:reachable}.
\end{lemma}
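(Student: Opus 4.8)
The plan is to prove the two statements of Lemma~\ref{lemma:reachable} in turn. For the first statement, the inclusion $\mathcal{S}_1+\ldots+\mathcal{S}_n\subseteq \operatorname{Null}(A)$ is immediate, since each $\mb{u}_i\in\mathcal{S}_i$ satisfies $A\mb{u}_i=\sum_{j\in\bar{\mc{N}}_i}A_ju_{ij}=\mathbf{0}$, and a sum of null-space vectors is again in the null space. So the work is entirely in the reverse inclusion $\operatorname{Null}(A)\subseteq\mathcal{S}_1+\ldots+\mathcal{S}_n$. Fix $\mb{v}=[v_1^T,\ldots,v_n^T]^T$ with $A\mb{v}=\sum_{i\in\mc{V}}A_iv_i=\mathbf{0}$. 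I would build a decomposition $\mb{v}=\sum_{i\in\mc{V}}\mb{u}_i$ with $\mb{u}_i\in\mathcal{S}_i$ by routing the ``mass'' $A_iv_i\in\mathbb{R}^m$ along a spanning tree of the connected graph $\mc{G}$. Concretely, root a spanning tree $T$ of $\mc{G}$ at some node, and for each non-root node $i$ let $\pi(i)$ be its parent; think of $b_i\defeq A_iv_i$ as a net surplus at node $i$ that must be cancelled. Since $\sum_i b_i=\mathbf{0}$, a standard tree-accumulation argument produces, for each tree edge $\{i,\pi(i)\}$, a flow value $r_i\in\mathbb{R}^m$ (the total surplus of the subtree below $i$) such that the surplus constraint $\sum$ balances at every node: at node $j$, $\sum_{i:\pi(i)=j} r_i - r_j$ (with the convention $r_{\text{root}}=\mathbf{0}$) equals $-b_j$.

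Next I would convert these edge flows $r_i\in\mathbb{R}^m$ into vector corrections. Because $A_j$ has full row rank, each $A_j$ is right-invertible: pick $A_j^{\dagger}$ with $A_jA_j^{\dagger}=I_m$. Then for the tree edge $\{i,\pi(i)\}$, assign the correction so that node $i$ sends $A_i^{\dagger}r_i$ out of its own coordinate and puts $-A_{\pi(i)}^{\dagger}r_i$ into $\pi(i)$'s coordinate (or the symmetric choice); one checks that the $A$-weighted sum of what node $i$ emits on each incident tree edge, plus a residual placed in its own coordinate to soak up $b_i$, can be arranged to lie in $\mathcal{S}_i$ (only neighbor coordinates touched, $A$-weighted sum zero). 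Summing these per-node contributions over all nodes recovers exactly $\mb{v}$ in each coordinate, by the balance identity for the $r_i$'s. Care is needed to bookkeep which node ``owns'' each tree edge's correction so that the support condition $u_{ij}=\mathbf{0}$ for $j\notin\bar{\mc{N}}_i$ is respected; assigning edge $\{i,\pi(i)\}$ to the child $i$ is the clean choice, and then node $i$ additionally needs to absorb its own residual $b_i$ into $u_{ii}$ via $A_i^{\dagger}$, which is legitimate since $i\in\bar{\mc{N}}_i$.

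For the second statement, I would specialize: in problem~\eqref{eq:ratecontrol} rewritten via~\eqref{eq:couplingconstraintcommunication}, the coupling variables are the $y_{i\ell}$ for $i\in\mc{T}_\ell$, grouped per link, and the coupling constraints decouple across links $\ell\in\mc{L}$ — constraint $\ell$ only involves $\{y_{i\ell}:i\in\mc{T}_\ell\}$ with coefficient row $\mathbf{1}^T$. With the neighbor definition $\mc{E}=\{\{i,j\}:i,j\in\mc{T}_\ell\text{ for some }\ell\}$, each set $\mc{T}_\ell$ induces a clique in $\mc{G}$, hence is connected, and within that clique the relevant constraint matrices are just scalars $1$, which have full row rank. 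So the first statement applies link-by-link: $\operatorname{Null}(\mathbf{1}^T)$ over the $\mc{T}_\ell$-block is spanned by the edge-difference vectors available inside the clique, and taking the direct sum over $\ell$ gives $\operatorname{Null}(A)$; the $x_i$ coordinates are unconstrained by the coupling constraint and contribute freely. I expect the main obstacle to be the first statement's bookkeeping: correctly defining the per-edge flows $r_i$, choosing right-inverses, and verifying simultaneously that (i) each assembled $\mb{u}_i$ respects the neighbor-support restriction, (ii) each satisfies the $A$-weighted-sum-zero condition, and (iii) the $\mb{u}_i$ sum to the prescribed $\mb{v}$ — all three must hold with a single consistent assignment of edges to nodes.
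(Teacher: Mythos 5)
Your overall strategy is the paper's own: show only $\operatorname{Null}(A)\subseteq\mc{S}_1+\ldots+\mc{S}_n$ needs work, use full row rank to get right inverses $A_j^\dag$ (so $A_jA_j^\dag=I$) and connectivity to route the zero-sum quantities $b_i=A_iv_i$ through the graph, and for the rate-control constraint decouple link by link over the cliques $\mc{T}_\ell$ (that part of your argument matches the paper's almost verbatim and is fine). The paper realizes the routing via the graph Laplacian --- writing $\mb{y}=(\mc{L}_{\mc{G}}\otimes I_m)\mb{z}$ and setting $[\mb{q}_i]_j=A_j^\dag[\mc{L}_{\mc{G}}]_{ji}z_i$ --- where you use a spanning tree with subtree flows $r_i$; these are interchangeable, and your tree version is if anything more elementary.

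There is, however, one concrete slip in your bookkeeping that would break the construction as literally described: the own-coordinate residual. Your tree-edge corrections place only vectors of the form $A_j^\dag(\cdot)\in\operatorname{Range}(A_j^T)$ into coordinate $j$, and by the balance identity they already deliver exactly $A_j^\dag b_j=A_j^\dag A_jv_j$ there, i.e.\ the full $\operatorname{Range}(A_j^T)$-component of $v_j$. So the residual node $i$ must add to $u_{ii}$ is the \emph{null-space} component $(I-A_i^\dag A_i)v_i$, not ``$b_i$ absorbed via $A_i^\dag$'': adding $A_i^\dag b_i$ again would double-count the range component (so $\sum_i\mb{u}_i\ne\mb{v}$) and, worse, would give $\sum_{j\in\bar{\mc{N}}_i}A_j[\mb{u}_i]_j=b_i\ne\mathbf{0}$, violating $\mb{u}_i\in\mc{S}_i$. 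With the residual taken in $\operatorname{Null}(A_i)$ both properties are preserved ($A_i$ annihilates it), and the argument closes; this is exactly the role of the decomposition $p_i=u_i+v_i$ with $v_i\in\operatorname{Null}(A_i)$ placed on the diagonal block in the paper's proof.
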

\begin{proof}
See Appendix \ref{ssec:prooflemmareachable}.
\end{proof}

In problems \eqref{eq:ED}-\eqref{eq:problemmultiple}, each $A_i=I$ which has full row rank. Then by Lemma \ref{lemma:reachable}, Assumption \ref{asm:reachable} holds when $\mc{G}$ is connected.



We also discuss Assumption \ref{asm:reachable} on the consensus constraint.
\begin{example}[consensus constraint]
If $\mc{G}$ is connected, then the constraint that every iterate should be in consensus, i.e. $x_1=x_2=\ldots=x_n$, 
can be rewritten as $\sum_{i=1}^n [\mc{L}_{\mc{G}}]_ix_i = \mathbf{0}$ where $[\mc{L}_{\mc{G}}]_i$ is the $i$th column of the graph Laplacian $\mc{L}_{\mc{G}}$ of $\mc{G}$. Hence, we can include this consensus constraint in problem \eqref{eq:prob} by letting each $A_i=[\mc{L}_{\mc{G}}]_i$. Assumption \ref{asm:reachable} holds if and only if $\mc{G}$ contains a star network, i.e., if $\bar{\mc{N}}_i=\mc{V}$ for some $i\in\mc{V}$. This follows since $\mc{S}_i=\{\mb{0}\}$ if $\bar{\mc{N}}_i\ne\mc{V}$ and $\mc{S}_i=\operatorname{Null}(A)$ otherwise. Therefore, when using the scheme \eqref{eq:aggregate} to ensure every iterates to be consensus, the algorithm can in general converge to the optimum of \eqref{eq:prob} from any feasible initial point only if $\mc{G}$ contains a star network.
\end{example}



\subsubsection{Problem Transformation using Barrier Function}\label{ssec:problemtrans}

As illustrated in Example \ref{ex:counterconstraints}, distributed feasible methods for solving \eqref{eq:prob} may get stuck in non-optimal points when local constraints are present. To solve this issue, we transform the local constraints in \eqref{eq:prob} using barrier functions:
\begin{equation}\label{eq:transprob}
	\begin{split}
		\underset{x_i\in \td{\mc{X}}_i, i\in\mathcal{V}}{\operatorname{minimize}}~~~&\sum_{i\in\mathcal{V}} f_i(x_i)+\rho B_i(x_i)\\
		\operatorname{subject~to}~&\sum_{i\in\mathcal{V}} A_ix_i = c,
	\end{split}
\end{equation}
where each $\td{\mc{X}}_i=\{x_i\in \mbb{R}^{d_i}:~g_i^j(x_i)<0,~\forall j=1,\ldots,q_i\}$ is the interior of $\mc{X}_i$ and each $B_i(x_i)=-\sum_{j=1}^{q_i}\frac{1}{g_i^j(x_i)}$ is the inverse barrier function of the constraints $g_i^j(x_i)\le 0$, $\forall j=1,\ldots,q_i$. Problem \eqref{eq:transprob} is an approximation of problem \eqref{eq:prob} whose approximation error will be analysed in Lemma \ref{lemma:optimalitygapconvex} in Section \ref{sec:convana}. In addition, every feasible solution of problem \eqref{eq:transprob} is also feasible to \eqref{eq:prob}.

For simplicity, we rewrite \eqref{eq:transprob} in the compact form:
\begin{equation}\label{eq:compactprob}
	\begin{split}
		\underset{\mb{x}\in \td{\mc{X}}}{\operatorname{minimize}}~~~& F(\mb{x}):=\sum_{i\in\mc{V}} f_i(x_i)+\rho B_i(x_i)\\
		\operatorname{subject~to}~&A\mb{x} = c,
	\end{split}
\end{equation}
where $\mb{x}=[x_1^T, \ldots, x_n^T]^T$, $\mc{\td{X}}=\td{\mc{X}}_1\times\ldots\times \td{\mc{X}}_n\subset\mathbb{R}^N$ with $N=\sum_{i\in\mathcal{V}} d_i$, $f(\mb{x})=\sum_{i\in\mathcal{V}} f_i(x_i)$, and $B(\mb{x})=\sum_{i\in\mathcal{V}}B_i(x_i)$.

\section{Distributed Feasible Method (DFM)}\label{sec:alg}

This section develops the DFM algorithm to solve problem \eqref{eq:prob}, based on the problem transformation described in Section~\ref{ssec:problemtrans}. In DFM, each node $i\in\mc{V}$ holds a local iterate $x_i^k\in\mbb{R}^{d_i}$, where $k\ge 0$ represents iteration index. For any $i\in\mc{V}$ and $k\ge 0$, we define
\begin{equation*}
	f_i^k(x_i)=f_i(x_i^k)+\langle\nabla f_i(x_i^k),x_i-x_i^k\rangle+\frac{L_i}{2}\|x_i-x_i^k\|^2
\end{equation*}
which is a convex approximation of $f_i$ around $x_i^k$.

The first step of DFM is initialization, in which all the nodes cooperatively set $x_i^0$ $\forall i\in\mc{V}$ such that $\mb{x}^0=[(x_1^0)^T, \ldots, (x_n^0)^T]^T$ is feasible to \eqref{eq:compactprob}. After the initialization, at each iteration $k\ge 0$, each node $i\in\mc{V}$ first solves $\{p_{ij}^k\}_{j\in\bar{\mc{N}}_i}$, which is the optimum of the following problem:
\begin{equation}\label{eq:pupdate}
	\begin{split}
		\underset{p_{ij}\in\mbb{R}^{d_j},~j\in \bar{\mc{N}}_i}{\operatorname{minimize}}~&~\sum_{j\in \bar{\mc{N}}_i}(f_j^k(x_j^k+p_{ij})+\rho B_j(x_j^k+p_{ij}))\\
		\operatorname{subject~to}~~&~\sum_{j\in \bar{\mc{N}}_i}A_jp_{ij}=\mb{0},\\
		&~x_j^k+p_{ij}\in \td{\mc{X}}_j,~\forall j\in\bar{\mc{N}}_i,
	\end{split}
\end{equation}
and then updates
\begin{equation}\label{eq:xupdate}
	x_i^{k+1} = x_i^k+\sum_{j\in\bar{\mc{N}}_i} \eta_j p_{ji}^k,
\end{equation}
where
\begin{equation}\label{eq:etavalue}
	\eta_j=\frac{1}{\max_{\ell\in\bar{\mc{N}}_j} |\bar{\mc{N}}_\ell|}.
\end{equation}

{
	\renewcommand{\baselinestretch}{1.05}
	\begin{algorithm} [tb]
		\caption{Distributed Feasible Method (DFM)}
		\label{alg:DRRA}
		\begin{algorithmic}[1]
			\small
			\STATE \textbf{Initialization:}
			\STATE All the nodes cooperatively choose a feasible solution $\mathbf{x}^0=[(x_1^0)^T, \ldots, (x_n^0)^T]^T$ of \eqref{eq:compactprob} and agree on $\rho>0$.
			\STATE Each node $i\in\mc{V}$ calculates $\eta_i$ by \eqref{eq:etavalue} through collecting $|\bar{\mc{N}}_j|$ from all its neighbors $j\in \mc{N}_i$.
			\FOR{$k=0,1,\ldots$}
			\FOR{each node $i\in\mathcal{V}$}
			\STATE broadcast $\nabla f_i(x_i^k)$ and $x_i^k$ to all neighbors $j\in\mathcal{N}_i$.
			\STATE upon receiving $\nabla f_j(x_j^k)$ and $x_j^k$ for all $j\in\mathcal{N}_i$, determine $p_{ij}^k$ $\forall j\in\bar{\mc{N}}_i$ by solving \eqref{eq:pupdate}.
			\STATE send each $\eta_ip_{ij}^k$ to $j\in\mc{N}_i$.
			\STATE upon receiving $\eta_jp_{ji}^k$ $\forall j\in\mc{N}_i$, compute $x_i^{k+1}$ by \eqref{eq:xupdate}.
			\ENDFOR
			\ENDFOR
		\end{algorithmic}
	\end{algorithm}
}

A detailed distributed implementation of DFM is given in Algorithm \ref{alg:DRRA}. In the initialization, each node $i\in\mc{V}$ shares $A_i$, $\mc{X}_i$, and $|\bar{\mc{N}}_i|$ with its neighbors $j\in\mc{N}_i$, which will be used to solve \eqref{eq:pupdate} and implement \eqref{eq:xupdate}. Sharing of $A_j$ or $|\mathcal{N}_j|$ between neighbors is common in algorithms \cite{XiaoL06b,Necoara13} for handling problem \eqref{eq:prob}. In addition, when all the $\mc{X}_j$'s and $A_j$'s are identical, this issue disappears.

At each iteration, the communication cost includes the broadcast of $\nabla f_i(x_i^k)$ and $x_i^k$ to every $j\in\mc{N}_i$ and the transmissions of $\eta_i p_{ij}^k$ between neighboring nodes $\{i,j\}\in\mc{E}$, and the main computational burden is to solve~\eqref{eq:pupdate}, which can be efficiently addressed by the interior point method \cite[Section 19]{Nocedal06}.

DFM can be cast into the general update scheme \eqref{eq:aggregate}. Moreover, it has a close connection to the weighted gradient method \cite{XiaoL06b} when the local constraint sets are absent.
\begin{rem}[Connection with weighted gradient method]\label{rem:wgm}
	Suppose that $\mc{X}_i=\mbb{R}$ and $A_i=1$ for all $i\in\mc{V}$. Then, by letting $L_i=L$ $\forall i\in\mathcal{V}$, DFM \eqref{eq:pupdate}--\eqref{eq:xupdate} becomes the weighted gradient descent method:
	\begin{equation}\label{eq:unconstrainedDFM}
		\mb{x}^{k+1} = \mb{x}^k-\frac{1}{L}H\nabla f(\mb{x}^k),
	\end{equation}
	where $H=\sum_{i\in\mc{V}} \eta_iH^i$ with $H^i=(h_{j\ell}^i)_{n\times n}$ defined as
	\begin{equation*}
		h_{j\ell}^i = \begin{cases}
			1-\frac{1}{|\bar{\mc{N}}_i|}, & \text{if } j=\ell,\\
			-\frac{1}{|\bar{\mc{N}}_i|}, & \text{if } j,\ell \in\bar{\mc{N}}_i \text{ and } j\ne \ell,\\
			0, & \text{otherwise}.
		\end{cases}
	\end{equation*}
	The update \eqref{eq:unconstrainedDFM} takes the same form compared to the typical weighted gradient method \cite{XiaoL06b} which solves the same problem, but has a different weight matrix $H$.
\end{rem}

\section{Convergence Analysis}\label{sec:convana}

This section derives convergence results for DFM. To this end, we first introduce the KKT condition for problem \eqref{eq:compactprob}, which is necessary for local optimality. A point $\mb{x}'\in\mbb{R}^N$ satisfies the KKT condition of problem \eqref{eq:compactprob} if
\begin{enumerate}
	\item $\mb{x}'$ is feasible to problem \eqref{eq:compactprob}.
	\item There exists $v\in\mbb{R}^m$ such that
	\begin{equation}\label{eq:kktcond}
		-\nabla F(\mb{x}')-A^Tv \in \mc{N}_{\td{\mc{X}}}(\mb{x}'),
	\end{equation}
	where $\mc{N}_{\td{\mc{X}}}(\mb{x}')$ is the normal cone of $\td{\mc{X}}$ at $\mb{x}'$.
\end{enumerate}
Because $\tilde{\mc{X}}$ is open, $\mathcal{N}_{\tilde{\mc{X}}}(\mb{x}')=\{\mathbf{0}\}$ for all $\mb{x}'\in \tilde{\mc{X}}$. Thus, the existence of $v\in\mbb{R}^m$ satisfying \eqref{eq:kktcond} is equivalent to
\begin{equation*}
\nabla F(\mb{x}') \in \operatorname{Range}(A^T).
\end{equation*}
For every $i\in {\mathcal V}$, let $P_i\in\mbb{R}^{N\times N}$ be the projection matrix of $\mc{S}_i$ and define $W=\sum_{i\in\mc{V}} \eta_iP_i$. Under Assumption \ref{asm:reachable}, $\operatorname{Null}(W)=\operatorname{Range}(A^T)$ and $\mathbf{x}'$ is a KKT point of \eqref{eq:compactprob} iff $\mathbf{x}'$ is feasible and $W\nabla F(\mathbf{x}')=\mathbf{0}$. Because $W$ is positive semi-definite, $W\nabla F(\mathbf{x}')=\mathbf{0}$ is equivalent to $\|\nabla F(\mathbf{x}')\|_W=0$.

Let $\mb{x}^k = [(x_1^k)^T, \ldots, (x_n^k)^T]^T$ for all $k\ge 0$, where $x_i^k$ $\forall i\in\mc{V}$ are generated by Algorithm \ref{alg:DRRA}.
\begin{theorem}\label{thm:convergence}
	Suppose that Assumptions \ref{asm:prob}--\ref{asm:reachable} hold. Then, $\mb{x}^k$ $\forall k\ge 0$ are feasible to problems \eqref{eq:compactprob} and \eqref{eq:prob}, and
	\begin{equation}\label{eq:theononconvex}
			\sum_{k=0}^\infty \|\nabla F(\mb{x}^k)\|_W^2\le 2(\rho L_B+L)(F(\mb{x}^0)-f^\star),
	\end{equation}
	where $L=\max_{i\in\mc{V}} L_i$ and $L_B=(4\beta_1^2((F(\mathbf{x}^0)-f^\star)/\rho)^3+2\beta((F(\mathbf{x}^0)-f^\star)/\rho)^2)\max_{i\in\mathcal{V}}q_i$. In addition,
	\begin{enumerate}
		\item if each $f_i$ is convex and the set $\mathcal{C}:=\{\mathbf{x}: \mathbf{x} \text{ is feasible to } \eqref{eq:compactprob} \text{ and } F(\mathbf{x})\le F(\mathbf{x}^0)\}$ is compact,
		\begin{equation}\label{eq:theoconvex}
			F(\mb{x}^k)-F^\star\le \frac{2(L\!+\!\rho L_B)R^2}{k\lambda_W},
		\end{equation}
		where $F^\star$ is the optimal value of problem \eqref{eq:compactprob}, $R$ is the diameter of $\mathcal{C}$, and $\lambda_W$ is the smallest nonzero eigenvalue of $W$.
		\item if each $f_i$ is $\sigma$-strongly convex for some $\sigma>0$,
		\begin{equation*}
			\begin{split}
	       	&F(\mb{x}^k)-F^\star\le\left(1\!-\!\frac{\sigma\lambda_W}{L\!+\!\rho L_B}\right)^k(F(\mb{x}^0)-F^\star).
			\end{split}
		\end{equation*}
	\end{enumerate}
\end{theorem}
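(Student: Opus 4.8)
The plan is to run a majorization--minimization (MM) argument on the barrier-penalized objective $F$ of \eqref{eq:compactprob}, using as surrogate the separable convex model $\tilde F^k(\mb x):=\sum_{i\in\mc V}\bigl(f_i^k(x_i)+\rho B_i(x_i)\bigr)$. By $L_i$-smoothness of $f_i$ (the descent lemma), $f_i^k\ge f_i$, hence $\tilde F^k\ge F$ on $\td{\mc X}$ with equality at $\mb x^k$; moreover $\tilde F^k$ is convex, since each $f_i^k$ is a strictly convex quadratic and each $B_i$ is convex (an inverse barrier of convex constraints). First I establish feasibility. For every $i$, \eqref{eq:pupdate} is feasible at $p_{ij}=\mb 0$, so $\{p_{ij}^k\}$ exist and $x_j^k+p_{ij}^k\in\td{\mc X}_j$ for all $j\in\bar{\mc N}_i$; by symmetry this also gives $x_i^k+p_{ji}^k\in\td{\mc X}_i$ for $j\in\bar{\mc N}_i$. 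Since $\eta_j=1/\max_{\ell\in\bar{\mc N}_j}|\bar{\mc N}_\ell|\le 1/|\bar{\mc N}_i|$ whenever $j\in\bar{\mc N}_i$, we get $\sum_{j\in\bar{\mc N}_i}\eta_j\le 1$, so \eqref{eq:xupdate} expresses $x_i^{k+1}$ as a convex combination of $x_i^k$ and the points $x_i^k+p_{ji}^k$, all lying in the convex set $\td{\mc X}_i$; thus $\mb x^{k+1}\in\td{\mc X}$. Summing $\sum_{j\in\bar{\mc N}_i}A_jp_{ij}^k=\mb 0$ over $i$ and reindexing the (centre, neighbour) pairs gives $A\mb x^{k+1}=A\mb x^k=c$. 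Hence every $\mb x^k$ is feasible to \eqref{eq:compactprob}, and therefore to \eqref{eq:prob}.

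Next comes the descent estimate. Applying Jensen's inequality to the convex $\tilde F^k$ block-by-block along \eqref{eq:xupdate}, reindexing the resulting double sum over pairs, and using the optimality of $\{p_{ij}^k\}_{j\in\bar{\mc N}_i}$ in \eqref{eq:pupdate} --- which, since these variables move only the blocks in $\bar{\mc N}_i$, equals minimizing $\tilde F^k(\mb x^k+\mb p)$ over $\mb p\in\mc S_i$ with $\mb x^k+\mb p\in\td{\mc X}$ --- one obtains $F(\mb x^{k+1})\le\tilde F^k(\mb x^{k+1})\le F(\mb x^k)+\sum_{i\in\mc V}\eta_i\bigl(\tilde F^k(\mb x^k+\mb p_i^k)-\tilde F^k(\mb x^k)\bigr)\le F(\mb x^k)$, where $\mb p_i^k\in\mc S_i$ collects $\{p_{ij}^k\}$. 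Thus $F(\mb x^k)$ is nonincreasing. Since $\mb x^k$ is feasible to \eqref{eq:prob}, $f(\mb x^k)\ge f^\star$, so with $B_i\ge 0$ we get $\rho B_i(x_i^k)\le\rho B(\mb x^k)=F(\mb x^k)-f(\mb x^k)\le F(\mb x^0)-f^\star$, i.e.\ every $g_i^j(x_i^k)\le -\rho/(F(\mb x^0)-f^\star)$. On the associated confinement set the Hessians $\nabla^2 B_i$ are bounded via $\|\nabla^2(-1/g)\|\le\|\nabla^2 g\|/g^2+2\|\nabla g\|^2/|g|^3$ together with Assumption \ref{asm:prob}(b); this is precisely the origin of the constant $L_B$, and on that set $B$ (hence $F$) is $(L+\rho L_B)$-smooth.

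To turn monotonicity into the rate, I sharpen the per-$i$ term: $\tilde F^k(\mb x^k+\mb p_i^k)-\tilde F^k(\mb x^k)\le\tilde F^k\bigl(\mb x^k-\tfrac{1}{L+\rho L_B}P_i\nabla F(\mb x^k)\bigr)-\tilde F^k(\mb x^k)$, and the right-hand side is at most $-\tfrac{1}{2(L+\rho L_B)}\|P_i\nabla F(\mb x^k)\|^2$ by the quadratic upper bound (legitimate because the trial step remains in the confinement set) and $P_i^2=P_i=P_i^T$. Summing with weights $\eta_i$ and recalling $W=\sum_{i\in\mc V}\eta_iP_i$ yields $F(\mb x^{k+1})\le F(\mb x^k)-\tfrac{1}{2(L+\rho L_B)}\|\nabla F(\mb x^k)\|_W^2$; telescoping and $F(\mb x^k)\ge f^\star$ give \eqref{eq:theononconvex}. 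For the last two claims, decompose $\mbb R^N=\operatorname{Null}(A)\oplus\operatorname{Range}(A^T)$, which under Assumption \ref{asm:reachable} equals $\operatorname{Range}(W)\oplus\operatorname{Null}(W)$, and write $\nabla F(\mb x^k)=g_\parallel+g_\perp$ accordingly. Since $\mb x^\star-\mb x^k\in\operatorname{Null}(A)$, $\langle\nabla F(\mb x^k),\mb x^k-\mb x^\star\rangle=\langle g_\parallel,\mb x^k-\mb x^\star\rangle$, while $\|\nabla F(\mb x^k)\|_W^2=g_\parallel^TWg_\parallel\ge\lambda_W\|g_\parallel\|^2$. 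In the convex case $F(\mb x^k)-F^\star\le\langle g_\parallel,\mb x^k-\mb x^\star\rangle\le R\|g_\parallel\|\le R\|\nabla F(\mb x^k)\|_W/\sqrt{\lambda_W}$ (both $\mb x^k$ and a minimizer $\mb x^\star$ lie in $\mc C$, whose compactness guarantees $\mb x^\star$ exists), so with $\delta_k:=F(\mb x^k)-F^\star$ the descent inequality becomes $\delta_{k+1}\le\delta_k-\tfrac{\lambda_W}{2(L+\rho L_B)R^2}\delta_k^2$, which unrolls to \eqref{eq:theoconvex}. In the strongly convex case $F$ is $\sigma$-strongly convex, so its restriction to $\{A\mb x=c\}$ obeys the Polyak--{\L}ojasiewicz bound $\tfrac12\|g_\parallel\|^2\ge\sigma(F(\mb x^k)-F^\star)$; combined with $\|\nabla F(\mb x^k)\|_W^2\ge\lambda_W\|g_\parallel\|^2$ and the descent inequality this gives $\delta_{k+1}\le(1-\tfrac{\sigma\lambda_W}{L+\rho L_B})\delta_k$.

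The main obstacle is the confinement/step-size bookkeeping behind the sharpened descent: one must verify that the iterates, the candidate points $x_j^k+p_{ij}^k$, and --- most delicately --- the trial steps $-\tfrac{1}{L+\rho L_B}P_i\nabla F(\mb x^k)$ all stay inside the region on which $B$ is $L_B$-smooth, so that the quadratic upper bound may be invoked. This is what pins $L_B$ to $D:=(F(\mb x^0)-f^\star)/\rho$ through the $1/g^2$ and $1/g^3$ terms, and it is where Assumptions \ref{asm:prob}(b)--(d) do the work; making the constants match the statement exactly (rather than up to a universal factor) is the fiddly part. The convexity and connectivity ingredients (convexity of each $B_i$, and $\operatorname{Range}(W)=\operatorname{Null}(A)$ from Assumption \ref{asm:reachable}) are comparatively routine.
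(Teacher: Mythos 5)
Your feasibility argument, the MM framing with the surrogate $\tilde F^k$, and the conversion of the per-step descent $F(\mb x^{k+1})\le F(\mb x^k)-\tfrac{1}{2(L+\rho L_B)}\|\nabla F(\mb x^k)\|_W^2$ into \eqref{eq:theononconvex}, \eqref{eq:theoconvex} and the linear rate all coincide with the paper's proof (the paper's convex case invokes \cite[Theorem 2.1.14]{Nesterov04} for the same recursion $\delta_{k+1}\le\delta_k-c\,\delta_k^2$, and its strongly convex case uses $F(\mb x^k)-F^\star\le\|(WW^\dag)\nabla F(\mb x^k)\|^2/(2\sigma)$, which is exactly your PL bound for $g_\parallel$). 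The gap is in how you obtain the per-step descent itself. You compare the optimal value of \eqref{eq:pupdate} with the value at the trial point $\mb x^k-\tfrac{1}{L+\rho L_B}P_i\nabla F(\mb x^k)$ and then invoke the quadratic upper bound. This requires (i) that the trial point is feasible for \eqref{eq:pupdate}, i.e.\ lies in $\td{\mc X}$, and (ii) that $F$ is $(L+\rho L_B)$-smooth along the whole segment from $\mb x^k$ to the trial point. Neither holds in general: $\nabla F$ contains $\rho\nabla B(\mb x^k)$, which is unbounded as iterates approach the boundary of the $\mc X_j$'s, and the Hessian bound behind your $L_B$ is valid only on the sublevel set $\{B_i^j\le(F(\mb x^0)-f^\star)/\rho\}$, which the trial segment can exit. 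You flag this as ``the fiddly part,'' but it is not a matter of constants --- it is the step that would fail, and no $L_B$ depending only on $(F(\mb x^0)-f^\star)/\rho$ repairs it, because the barrier admits no uniform smoothness modulus on the open set $\td{\mc X}$.

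The paper avoids the trial point entirely. It uses the exact first-order optimality conditions of \eqref{eq:pupdate}: since each $\td{\mc X}_j$ is open, there exists $v_i^k$ with $\nabla\phi_j^k(x_j^k+p_{ij}^k)=A_j^Tv_i^k$ for all $j\in\bar{\mc N}_i$, whence $\sum_{j\in\bar{\mc N}_i}\langle\nabla\phi_j^k(x_j^k+p_{ij}^k),p_{ij}^k\rangle=0$. The quantitative decrease is then extracted from a \emph{two-point} co-coercivity inequality for the inverse barrier (Lemma \ref{lemma:smooth}), namely $B_i^j(y)-B_i^j(x)-\langle\nabla B_i^j(x),y-x\rangle\ge\|\nabla B_i^j(y)-\nabla B_i^j(x)\|^2/(8\beta_1^2\mc M^3+4\beta\mc M^2)$, which needs only the two evaluation points --- not the segment between them --- to satisfy $B_i^j\le\mc M$; both $x_j^k$ and $x_j^k+p_{ij}^k$ are shown to lie in that sublevel set via the monotonicity of $F$ and the optimality of the subproblem (\eqref{eq:BijxM}--\eqref{eq:BijxpM}). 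Finally, $\|\nabla F(\mb x^k)\|_{P_i}^2$ is recovered not from a gradient step but from geometry: the vector whose blocks are $\nabla\phi_j^k(x_j^k+p_{ij}^k)$ lies in $\mc S_i^{\perp}$, so $\sum_{j\in\bar{\mc N}_i}\|\nabla\phi_j^k(x_j^k)-\nabla\phi_j^k(x_j^k+p_{ij}^k)\|^2\ge\|P_i\nabla F(\mb x^k)\|^2$. To make your argument rigorous you would have to replace the gradient trial step by this KKT-plus-co-coercivity mechanism; as written, the sharpened descent inequality is unjustified.
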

\begin{proof}
	See Appendix \ref{ssec:convergenceproof}.
\end{proof}

In Theorem \ref{thm:convergence}, \eqref{eq:theononconvex} implies $\lim_{k\rightarrow +\infty} \|\nabla F(\mb{x}^k)\|_W^2 = 0$. In addition, according to \cite[Lemma 1]{Davis16}, \eqref{eq:theononconvex} also yields $\min_{0\le t\le k} \|\nabla F(\mb{x}^t)\|_W^2 = o(1/k)$. 

Below, we investigate the optimality gap between problems \eqref{eq:prob} and \eqref{eq:compactprob}, provided that all $f_i$'s are convex. Specifically, we study the range of $\rho$ for guaranteeing certain accuracy.
\begin{lemma}\label{lemma:optimalitygapconvex}
	Suppose that Assumptions \ref{asm:prob}--\ref{asm:reachable} hold and $f_i$ $\forall i\in\mathcal{V}$ are convex. Let $\tilde{\mathbf{x}}^\star$ be an optimal solution of \eqref{eq:compactprob} and $\underline{f}\le f^\star$.
	For any $\epsilon>0$, $f(\tilde{\mathbf{x}}^\star)-f^\star\le \epsilon$ if
		\begin{equation}\label{eq:rhorangeinv}
	    0<\rho\le\begin{cases}
	        \frac{\epsilon}{2B(\mb{x}')}, & f(\mathbf{x}')-\underline{f}\le \epsilon/2\\
	        \frac{\epsilon^2}{4(f(\mathbf{x}')-\underline{f})B(\mb{x}')}, & \text{otherwise}
	    \end{cases}
	\end{equation}
	for some feasible solution $\mathbf{x}'$ of \eqref{eq:compactprob}.
\end{lemma}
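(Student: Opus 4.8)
The plan is to bound $f(\tilde{\mathbf{x}}^\star)-f^\star$ by comparing the values of the transformed objective $F = f + \rho B$ at the optimizer $\tilde{\mathbf{x}}^\star$ of \eqref{eq:compactprob} and at the arbitrary feasible reference point $\mathbf{x}'$. First I would observe that since $\tilde{\mathbf{x}}^\star$ minimizes $F$ over the feasible set of \eqref{eq:compactprob}, and $\mathbf{x}'$ is feasible for \eqref{eq:compactprob} (hence lies in $\tilde{\mathcal{X}}$, where $B$ is finite), we have $f(\tilde{\mathbf{x}}^\star) + \rho B(\tilde{\mathbf{x}}^\star) \le f(\mathbf{x}') + \rho B(\mathbf{x}')$. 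The inverse barrier $B_i = -\sum_j 1/g_i^j$ is nonnegative on $\tilde{\mathcal{X}}_i$ (since $g_i^j < 0$ there), so $B(\tilde{\mathbf{x}}^\star) \ge 0$, giving the clean inequality
\begin{equation*}
f(\tilde{\mathbf{x}}^\star) - f^\star \le f(\mathbf{x}') - f^\star + \rho B(\mathbf{x}').
\end{equation*}
Using $\underline{f} \le f^\star$, this is at most $f(\mathbf{x}') - \underline{f} + \rho B(\mathbf{x}')$. At this point, if $f(\mathbf{x}')-\underline{f} \le \epsilon/2$, then choosing $\rho \le \epsilon/(2B(\mathbf{x}'))$ immediately yields $f(\tilde{\mathbf{x}}^\star)-f^\star \le \epsilon/2 + \epsilon/2 = \epsilon$, which handles the first case.

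The second case, where $f(\mathbf{x}') - \underline{f} > \epsilon/2$, requires a bit more care because the naive bound above is too weak: we cannot make $f(\mathbf{x}')-\underline{f}$ small. The idea is that we are allowed to move the reference point toward the interior along the segment connecting $\mathbf{x}'$ to the strictly-interior feasible point $\tilde{\mathbf{x}} = \{\tilde{x}_i\}$ from Assumption \ref{asm:prob}(c), or more simply to exploit convexity and scaling. I would instead interpolate: for $t \in [0,1]$ consider $\mathbf{x}_t = (1-t)\mathbf{x}' + t\mathbf{x}''$ where $\mathbf{x}''$ is a strictly feasible point with a well-controlled barrier value; $\mathbf{x}_t$ is feasible for the coupling constraint (both endpoints satisfy $A\mathbf{x} = c$) and lies in $\tilde{\mathcal{X}}$ for $t>0$ by convexity of each $\tilde{\mathcal{X}}_i$. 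Then by convexity of $f$, $f(\mathbf{x}_t) - f^\star \le (1-t)(f(\mathbf{x}')-f^\star) + t(f(\mathbf{x}'')-f^\star)$, and the barrier term $\rho B(\mathbf{x}_t)$ can be controlled in terms of $t$ and the distances $g_i^j(\mathbf{x}_t)$. Applying the first-case bound to $\mathbf{x}_t$ with an appropriate choice of $t$ balancing the two terms, and then optimizing over $t$, should produce a threshold on $\rho$ of the stated form $\rho \le \epsilon^2 / (4(f(\mathbf{x}')-\underline{f})B(\mathbf{x}'))$; the factor $f(\mathbf{x}')-\underline{f}$ in the denominator and the square on $\epsilon$ are exactly what one expects from such a two-term balancing argument (choosing $t \approx \epsilon/(2(f(\mathbf{x}')-\underline{f}))$).

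The main obstacle I anticipate is controlling $B(\mathbf{x}_t)$ along the interpolation: the barrier blows up as $t \to 0$, so one needs a quantitative lower bound on $|g_i^j(\mathbf{x}_t)|$ in terms of $t$ (e.g., $g_i^j(\mathbf{x}_t) \le t\, g_i^j(\mathbf{x}'')< 0$ when $g_i^j(\mathbf{x}') \le 0$, using convexity of $g_i^j$, which gives $B(\mathbf{x}_t) \le B(\mathbf{x}'')/t$). If that bound goes through cleanly, the rest is algebraic bookkeeping. A delicate point is making the final statement match the paper's phrasing "for some feasible solution $\mathbf{x}'$" — i.e., the bound \eqref{eq:rhorangeinv} is allowed to be stated in terms of the user's chosen $\mathbf{x}'$, so the interpolation point $\mathbf{x}''$ and its barrier value must ultimately be absorbed into (or bounded by) quantities at $\mathbf{x}'$; I would check whether the authors intend $\mathbf{x}'$ itself to already be "deep enough" in the interior, in which case the interpolation collapses and only the scalar balancing between $f(\mathbf{x}')-\underline{f}$ and $\rho B(\mathbf{x}')$ remains, directly giving \eqref{eq:rhorangeinv}.
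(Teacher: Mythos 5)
Your first case is correct and is exactly the paper's argument: $f(\tilde{\mathbf{x}}^\star)\le F(\tilde{\mathbf{x}}^\star)\le F(\mathbf{x}')=f(\mathbf{x}')+\rho B(\mathbf{x}')$ (using $B\ge 0$ on $\tilde{\mathcal{X}}$ and optimality of $\tilde{\mathbf{x}}^\star$ for \eqref{eq:compactprob}), and then $\underline{f}\le f^\star$ and $\rho\le\epsilon/(2B(\mathbf{x}'))$ give the bound.

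The second case has a genuine gap: you never identify the correct second endpoint of the interpolation, and the roles you assign to the two endpoints make the balancing fail. You take $\mathbf{x}_t=(1-t)\mathbf{x}'+t\mathbf{x}''$ with $\mathbf{x}''$ ``a strictly feasible point with a well-controlled barrier value'' and propose the small weight $t\approx\epsilon/(2(f(\mathbf{x}')-\underline{f}))$ on $\mathbf{x}''$. With weight $1-t\approx 1$ on $\mathbf{x}'$, convexity only gives $f(\mathbf{x}_t)-f^\star\le(1-t)(f(\mathbf{x}')-f^\star)+t(f(\mathbf{x}'')-f^\star)\approx f(\mathbf{x}')-f^\star$, which in this case exceeds $\epsilon/2$ and cannot be reduced, so applying the case-1 bound to $\mathbf{x}_t$ does not close the argument. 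The missing idea is that the second endpoint must be $\mathbf{x}^\star$, an optimizer of the \emph{original} problem \eqref{eq:prob}: it is the only point contributing $f$-value $f^\star$, and it is in general on the boundary of the sets $\mathcal{X}_i$, hence not strictly feasible and with possibly infinite barrier. The paper sets $\mathbf{x}^\alpha=\alpha\mathbf{x}'+(1-\alpha)\mathbf{x}^\star$ with the \emph{small} weight $\alpha=\epsilon/(2(f(\mathbf{x}')-\underline{f}))$ placed on the strictly feasible reference point $\mathbf{x}'$. Then convexity of $f$ gives $f(\mathbf{x}^\alpha)-f^\star\le\alpha(f(\mathbf{x}')-f^\star)\le\epsilon/2$, while convexity of $g_i^j$ together with $g_i^j(x_i^\star)\le 0$ gives $g_i^j(x_i^\alpha)\le\alpha g_i^j(x_i')<0$ and hence $B(\mathbf{x}^\alpha)\le B(\mathbf{x}')/\alpha$, so the stated bound on $\rho$ yields $\rho B(\mathbf{x}^\alpha)\le\epsilon/2$. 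In other words, the barrier at the interpolated point is controlled by the barrier of the strictly feasible, \emph{small-weight} endpoint divided by its weight --- the opposite assignment from your sketch. Your closing concern about whether $\mathbf{x}'$ must be ``deep enough in the interior'' dissolves once the second endpoint is taken to be $\mathbf{x}^\star$: no interiority of $\mathbf{x}'$ beyond feasibility for \eqref{eq:compactprob} is needed.
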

\begin{proof}
	See Appendix \ref{ssec:proofofoptimalitygap}.
\end{proof}

By Lemma \ref{lemma:optimalitygapconvex}, DFM can converge to arbitrarily small error bound of the optimal value. If some $\underline{f}\le f^\star$ is known, \eqref{eq:rhorangeinv} can be determined in a distributed way. For example, set $\mathbf{x}'=\mathbf{x}^0$ and let each node $i$ calculate $f_i(x_i^0)$, $B_i(x_i^0)$. Then, by adding all the $f_i(x_i^0)$ and $B_i(x_i^0)$ by distributed sum protocols (e.g., \cite{ChenJY06}), the quantities $f(\mathbf{x}^0)$ and $B(\mathbf{x}^0)$ are obtained by all the nodes, so the range of $\rho$ in \eqref{eq:rhorangeinv} is known.

\begin{corollary}\label{prop:conv}
    Suppose that Assumptions \ref{asm:prob}--\ref{asm:reachable} hold. For any $0<\epsilon<2(f(\mathbf{x}^0)-f^\star)$, set $\rho=\frac{\epsilon^2}{4(f(\mathbf{x}^0)-\underline{f})B(\mb{x}^0)}$ and let $k_\epsilon$ be the smallest $k\ge 0$ satisfying $f(\mathbf{x}^k)-f^\star\le 2\epsilon$. Then,
    \begin{enumerate}
    \item Convex: if each $f_i$ is convex and $\mathcal{C}$ defined in Theorem \ref{thm:convergence} is compact, then $k_\epsilon=O(\frac{1}{\epsilon^5\lambda_W})$.
	\item Strongly convex: if each $f_i$ is $\sigma$-strongly convex for some $\sigma>0$, then $k_\epsilon=O(\frac{1}{\sigma\lambda_W\epsilon^4}\ln\frac{1}{\epsilon})$.
	\end{enumerate}
\end{corollary}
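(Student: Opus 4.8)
The plan is to combine the optimality-gap bound of Lemma~\ref{lemma:optimalitygapconvex} with the iteration-complexity estimates of Theorem~\ref{thm:convergence}, tracking how every constant scales with $\epsilon$ through the choice $\rho=\Theta(\epsilon^2)$. First I would fix $\rho$ as prescribed and apply Lemma~\ref{lemma:optimalitygapconvex} with $\mb{x}'=\mb{x}^0$, which is feasible for \eqref{eq:compactprob} by the initialization of DFM. Because $\epsilon<2(f(\mb{x}^0)-f^\star)\le 2(f(\mb{x}^0)-\underline{f})$, we have $f(\mb{x}^0)-\underline{f}>\epsilon/2$, so the prescribed $\rho$ is exactly the quantity in the second branch of \eqref{eq:rhorangeinv}; hence the lemma applies, and (as its proof exhibits an interior feasible point of \eqref{eq:compactprob} with objective value at most $f^\star+\epsilon$) we obtain $F^\star\le f^\star+\epsilon$. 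Since $B(\mb{x}^k)\ge 0$ gives $f(\mb{x}^k)\le F(\mb{x}^k)$, it follows that $f(\mb{x}^k)-f^\star\le (F(\mb{x}^k)-F^\star)+(F^\star-f^\star)\le (F(\mb{x}^k)-F^\star)+\epsilon$, so it suffices to bound the number of iterations until $F(\mb{x}^k)-F^\star\le\epsilon$ using Theorem~\ref{thm:convergence}.

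Next I would pin down the $\epsilon$-dependence of the constants in Theorem~\ref{thm:convergence}. From the same computation, $\rho B(\mb{x}^0)=\epsilon^2/\big(4(f(\mb{x}^0)-\underline{f})\big)<\epsilon/2<f(\mb{x}^0)-f^\star$, so $F(\mb{x}^0)-f^\star=f(\mb{x}^0)-f^\star+\rho B(\mb{x}^0)<2(f(\mb{x}^0)-f^\star)$; hence, since $F^\star\ge f^\star$, we get $F(\mb{x}^0)-F^\star=O(1)$ and $(F(\mb{x}^0)-f^\star)/\rho=\Theta(1/\epsilon^2)$, both with constants independent of $\epsilon$. Substituting the latter into the formula for $L_B$, the cubic term dominates, so $L_B=\Theta(1/\epsilon^6)$, $\rho L_B=\Theta(1/\epsilon^4)$, and $L+\rho L_B=\Theta(1/\epsilon^4)$. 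The constant $\lambda_W$, the smallest nonzero eigenvalue of $W=\sum_{i\in\mathcal{V}}\eta_iP_i$, depends only on $\mathcal{G}$ and $A$ and is therefore independent of $\epsilon$ (and is kept explicit in the statement). For the convex case I would also bound the diameter $R$ of $\mathcal{C}$: since $B\ge 0$ implies $\mathcal{C}\subseteq\{\mb{x}\in\mathcal{X}:A\mb{x}=c,\ f(\mb{x})\le F(\mb{x}^0)\}$ and $F(\mb{x}^0)<2f(\mb{x}^0)-f^\star$ for every admissible $\epsilon$, the set $\mathcal{C}$ lies inside a fixed compact set independent of $\epsilon$, so $R=O(1)$.

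It then remains to substitute these estimates into the rates of Theorem~\ref{thm:convergence}. In the convex case, \eqref{eq:theoconvex} gives $F(\mb{x}^k)-F^\star\le 2(L+\rho L_B)R^2/(k\lambda_W)$, which is $\le\epsilon$ as soon as $k\ge 2(L+\rho L_B)R^2/(\epsilon\lambda_W)=O\big(1/(\epsilon^5\lambda_W)\big)$, giving $k_\epsilon=O\big(1/(\epsilon^5\lambda_W)\big)$. In the strongly convex case, $\big(1-\sigma\lambda_W/(L+\rho L_B)\big)^k(F(\mb{x}^0)-F^\star)\le\epsilon$ once $k\ge \frac{\ln\big((F(\mb{x}^0)-F^\star)/\epsilon\big)}{-\ln(1-\sigma\lambda_W/(L+\rho L_B))}$; using $-\ln(1-t)\ge t$ together with $L+\rho L_B=\Theta(1/\epsilon^4)$ and $F(\mb{x}^0)-F^\star=O(1)$, this is $O\big(\tfrac{L+\rho L_B}{\sigma\lambda_W}\ln\tfrac1\epsilon\big)=O\big(\tfrac{1}{\sigma\lambda_W\epsilon^4}\ln\tfrac1\epsilon\big)$, which is the claimed $k_\epsilon$. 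I expect the main obstacle to be precisely the bookkeeping in the second paragraph: one must verify that $F(\mb{x}^0)-F^\star$ and $R$ stay $O(1)$ over the \emph{entire} admissible range of $\epsilon$ (so they cannot silently worsen the exponents), that the cubic term genuinely dominates in $L_B$, and that the slightly stronger conclusion $F^\star\le f^\star+\epsilon$ — rather than the literal statement $f(\tilde{\mb{x}}^\star)-f^\star\le\epsilon$ of Lemma~\ref{lemma:optimalitygapconvex} — is what gets extracted from that lemma's proof.
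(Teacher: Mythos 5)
Your proposal is correct and follows essentially the same route as the paper's own (very terse) proof: fix $\rho=\Theta(\epsilon^2)$ so that $\rho L_B=O(\epsilon^{-4})$, invoke Lemma~\ref{lemma:optimalitygapconvex} to control $F^\star-f^\star$, and then read off $k_\epsilon$ from the two rates in Theorem~\ref{thm:convergence}. The bookkeeping you flag as the main obstacle --- in particular extracting $F^\star\le f^\star+\epsilon$ from the lemma's proof rather than its literal statement, and checking that $F(\mb{x}^0)-F^\star$ and $R$ stay $O(1)$ --- is exactly what the paper leaves implicit, and you resolve it correctly.
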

\begin{proof}
Note that $\rho=O(\epsilon^2)$ and $\rho L_B=O(\epsilon^{-4})$, where $L_B=O(1/\rho^3)$ is defined in Theorem \ref{thm:convergence}. Then, combining Theorem \ref{thm:convergence} and Lemma \ref{lemma:optimalitygapconvex} yields the result.
\end{proof}

In Corollary \ref{prop:conv}, we only show the effect of important factors. The Lipschitz constant $L$ is usually small compared to $\rho L_B=O(\epsilon^{-4})$ and is therefore ignored. The slow convergence rates in Corollary \ref{prop:conv} are caused by the use of barrier functions for transforming the difficult problem \eqref{eq:prob}. However, the practical performance of DFM is much better than this worst-case bound, as will be shown in Section \ref{sec:numerical}.

\section{Numerical Experiment}\label{sec:numerical}

To verify the theoretical results in Section \ref{sec:convana} and demonstrate the practical performance of DFM, we evaluate DFM against alternative methods on the motivating examples in Section \ref{ssec:ME}. The experimental settings are detailed below.

\textbf{Experiment I (Economic Dispatch):} We consider problem~\eqref{eq:ED} and draw both problem data and communication network from the IEEE-118 bus system \cite{Zimmerman11}, where $f_i$ is a quadratic function, $\mathcal{X}_i:=[\ell_i, u_i]\subset\mathbb{R}$ is a closed interval, and the number of power generators is $n=54$. In addition to generators, the IEEE-118 bus system also includes several non-generators. The generators and non-generators form an undirected, connected communication graph $\mathcal{G}'$. To form the communication network ${\mathcal G}$ for the generators, we define two generators $i,j$ to be neighbors in $\mathcal{G}$ if there exists a path in $\mathcal{G}'$ between $i,j$ which does not include any other generators.

\textbf{Experiment II (Multiple Resources):} 
We consider problem~\eqref{eq:problemmultiple} and draw both problem data and communication network from the IEEE-118 bus system. We let each power generator be a renewable or a coal power generator with equal probability. We treat both generators and non-generators as users and set $n=118$. The user disutility function is $f_i(x_i^{\text{renew}}, x_i^{\text{coal}})=\alpha_i(x_i^{\text{renew}}+x_i^{\text{coal}}-D_i)^2+\beta_i(x_i^{\text{coal}})^2$, where $x_i^{\text{renew}}$ and $x_i^{\text{coal}}$ are renewable and coal power consumption of node $i$, respectively, $D_i$ is the power demand of node $i$, $\alpha_i>0$ is a parameter indicating the discomfort of user $i$ for changing its demand, and $\beta_i>0$ is a parameter capturing the disapproval of user $i$ of using non-renewable energy. We set $\mathcal{X}_i=\{x_i=(x_i^{\text{renew}}, x_i^{\text{coal}})^T: x_i\ge\tilde{\ell}_i\}$, where $\tilde{\ell}_i=(-u_i,0)^T$ if $i$ is a renewable power generator and $\tilde{\ell}_i=(0,-u_i)^T$ if $i$ is a coal power generator, where $u_i$ is the same as in Experiment I, and $\tilde{\ell}_i=(0,0)^T$ if $i$ is a non-generator. The communication graph is $\mathcal{G}'$ in Experiment I.

\textbf{Experiment III (Rate Control):} We consider problem~\eqref{eq:ratecontrol} where the network of routers and the capacity of each link are shown in Fig. \ref{fig:routing}. Each source $S_i$ sends data to $D_i$ via some links and routers. We treat source nodes whose routing paths include common links, i.e.  $\{S_1,S_2\}$, $\{S_2,S_3\}$, and $\{S_3,S_4\}$, as neighbors in the communication graph. We set each $U_i$ in problem \eqref{eq:ratecontrol} as a sigmoidal utility function $U_i(x_i) = \frac{p_i}{1+e^{-a_i(x_i-b_i)}}+q_i$ for some randomly generated $a_i,b_i,p_i>0$ and choose $q_i\in\mathbb{R}$ be such that $U_i(0)=0$. Although these utility functions are not concave, they have Lipschitz-continuous gradients and satisfy Assumption \ref{asm:prob}.
\begin{figure}
    \centering
    \includegraphics[scale=0.6]{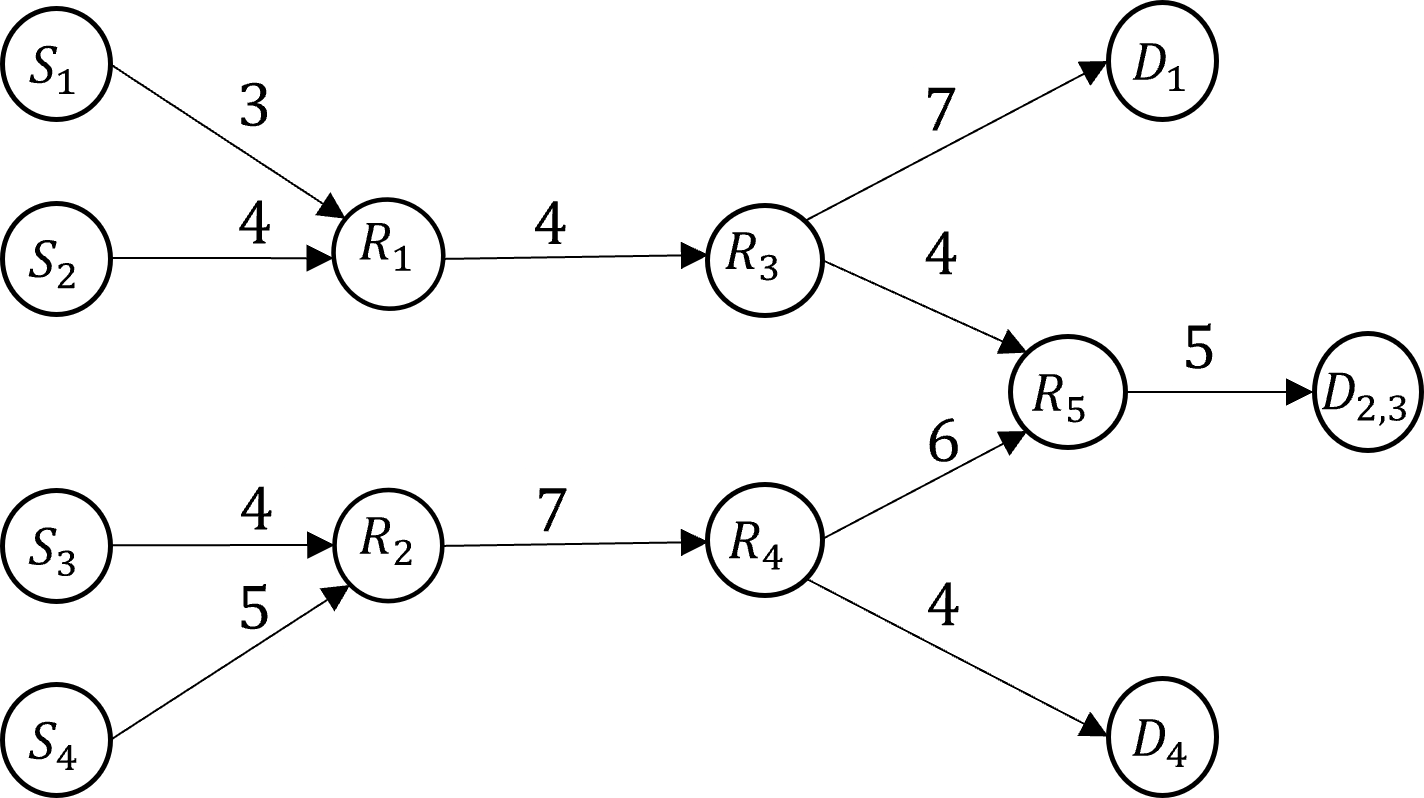}
    \caption{Routing network in Experiment III}
    \label{fig:routing}
\end{figure}

\begin{figure*}[!htb]
    \centering
    \subfigure[DFM]{\includegraphics[width=0.335\textwidth]{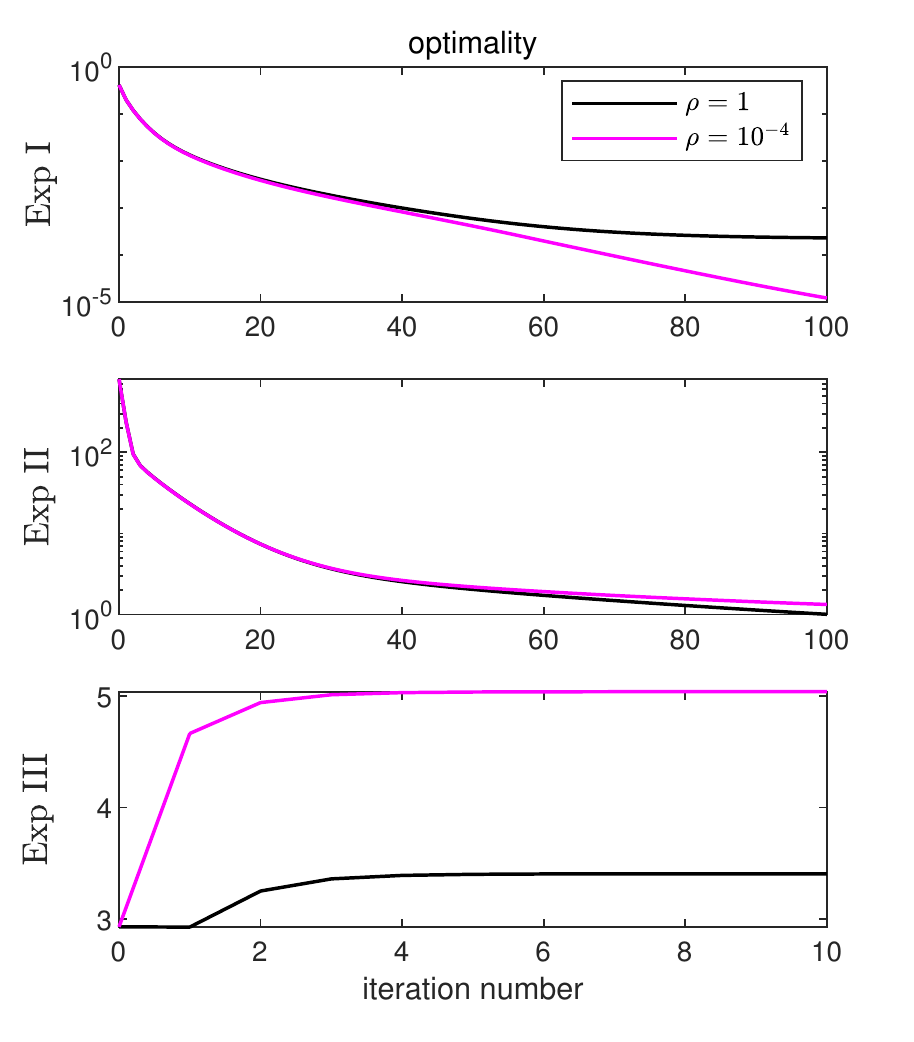}}
    \subfigure[Non-feasible methods]{\includegraphics[width=0.6\textwidth]{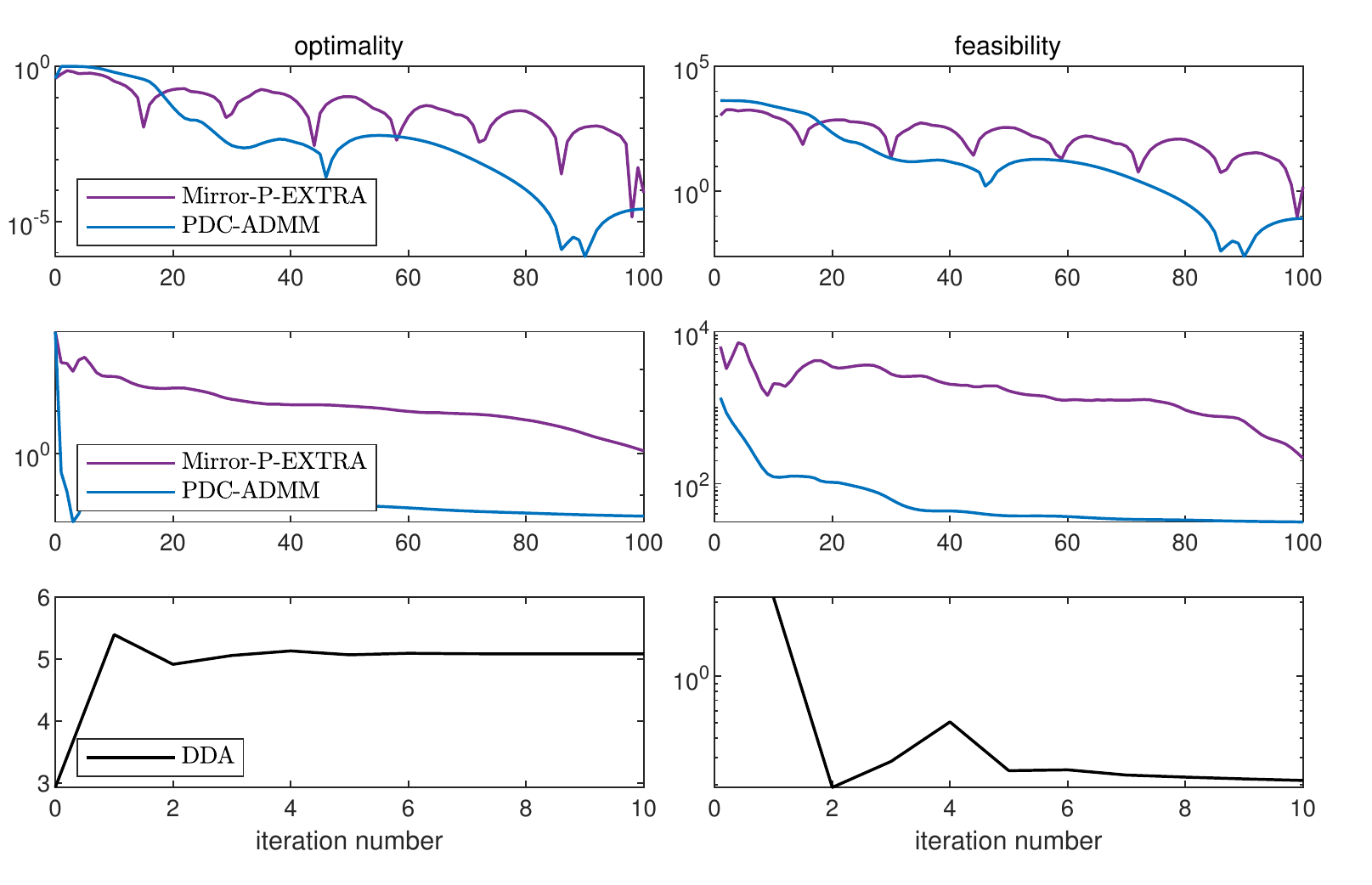}}
    \caption{Convergence of methods in Experiment I-III (Optimality represents objective error in Experiment I-II and objective value in Experiment III where the optimal value is unknown due to non-concavity of \eqref{eq:ratecontrol}. Feasibility means feasibility error in all experiments).}
    \label{fig:foobar}
\end{figure*}

As detailed in Section \ref{ssec:literature}, we have not found distributed feasible methods that can solve any of the three experiments. Hence, we consider non-feasible distributed methods Mirror-P-EXTRA \cite{Nedic17} and PDC-ADMM \cite{Chang16} in Experiment I-II and DDA \cite{Lee05} in Experiment III. These methods cannot guarantee feasibility of iterates.
For fair comparison, we choose the same initial primal iterate for all methods and tune all parameters within their theoretically justified ranges.

We display the convergence in all the three experiments in Fig. \ref{fig:foobar}, including DFM in Fig. \ref{fig:foobar}(a) and alternative non-feasible methods in Fig. \ref{fig:foobar}(b). The feasibility error of DFM is always $0$ and is thus ignored. Since the initial iterate is feasible, we plot the feasibility error starting from iteration $1$. In Fig. \ref{fig:foobar}, optimality represents the relative objective error $|(f(\mathbf{x}^k)-f^\star)/f^\star|$ in Experiment I-II and objective value of the maximization problem \eqref{eq:ratecontrol} in Experiment III. Since the sigmoidal function is non-concave, the optimal value of problem \eqref{eq:ratecontrol} is unknown. Moreover, since the local constraints are satisfied by all the simulated methods, we set $|\sum_{i=1}^n x_i^k-
c|$, $\|\sum_{i=1}^n x_i^k\|$, and $\sqrt{\sum_{\ell\in\mathcal{E}}(\max(0,\sum_{i\in \mc{T}_\ell} x_i-c_\ell))^2}$ as the feasibility error in Experiments I-III, respectively.

From Fig. \ref{fig:foobar}, we observe that the objective value of DFM converges quickly in all the experiments even compared with non-feasible methods, which demonstrates the competitive performance of DFM. Although in Experiment II, the objective value of PDC-ADMM converges faster than that of DFM, all non-feasible methods require a large number of iterations to reach small feasibility error. The experiment results also validate Theorem \ref{thm:convergence}, i.e., all iterates generated by DFM are feasible to \eqref{eq:prob} and converge under mild conditions. Moreover, from Figure \ref{fig:foobar}(a), we can see that smaller $\rho$ usually leads to higher accuracy.


\section{Conclusion}\label{sec:conclusion}
Our ability to allocate resources in a safe and efficient manner is critical to the operation of many physical network systems. We have proposed a distributed feasible method (DFM) to solve resource allocation problems while guaranteeing that all iterates are feasible and thus safe to implement. DFM is designed based on barrier functions and can handle more general problems (multiple coupling constraints, multi-dimensional variables, and non-convex objective functions) and network topologies than the previous state-of-the-art.

\section{Appendix}

\subsection{Proof of Lemma \ref{lemma:reachable}}\label{ssec:prooflemmareachable}
It's straightforward to see the left-hand side of \eqref{eq:identicalset} is a subset of $\operatorname{Null}(A)$ because $\mc{S}_i\subseteq \operatorname{Null}(A)$ for all $i\in\mathcal{V}$. Hence, to show \eqref{eq:identicalset}, we only need to prove
\begin{equation}\label{eq:subseteq}
    \operatorname{Null}(A)\subseteq \mc{S}_1 + \ldots + \mc{S}_n.
\end{equation}

First, we suppose that all the $A_i$'s have full row rank and prove \eqref{eq:subseteq} by showing that for any $\mathbf{p}\in\operatorname{Null}(A)$, there exist $\mb{q}_i\in\mc{S}_i$ $\forall i\in\mathcal{V}$ such that $\sum_{i\in\mc{V}} \mb{q}_i=\mb{p}$. Let $p_i\in\mbb{R}^{d_i}$ be the $i$th block of $\mb{p}$ and decompose each $p_i=u_i+v_i$, where $u_i\in\operatorname{Range}(A_i^T)$ and $v_i\in\operatorname{Null}(A_i)$. Let $y_i=A_iu_i$ for all $i\in\mathcal{V}$ and $\mb{y}=[(y_1)^T, \ldots, (y_n)^T]^T$. We have \begin{equation}\label{eq:sumyis0}
    \sum_{i\in\mc{V}} y_i=\sum_{i\in\mc{V}} A_iu_i=\sum_{i\in\mc{V}} A_ip_i=A\mb{p}=\mathbf{0}.
\end{equation}
Let $\mc{L}_{\mc{G}}$ be the graph Laplacian of $\mc{G}$. By \eqref{eq:sumyis0} and the connectivity of $\mc{G}$, we have $\mb{y}\in\operatorname{Range}(\mc{L}_{\mc{G}}\otimes I_m)$, i.e., there exists $\mathbf{z}$ satisfying
\begin{equation*}
    \mb{y}=(\mathcal{L}_{\mc{G}}\otimes I_m)\mb{z}.
\end{equation*}
Let $[\mb{q}_i]_j\in\mathbb{R}^{d_j},z_j\in\mbb{R}^m$ denote the $j$th block of $\mb{q}_i$ and $\mb{z}$, respectively. We construct the following $\mb{q}_i$:
\begin{equation*}
    [\mb{q}_i]_j = \begin{cases}
        A_i^\dag[\mc{L}_{\mc{G}}]_{ii}z_i+v_i, & i=j,\\
        A_j^\dag[\mc{L}_{\mc{G}}]_{ji}z_i, & \text{otherwise},
    \end{cases}
\end{equation*}
where $A_j^\dag = A_j^T(A_jA_j^T)^{-1}$ is the Moore-Penrose inverse of $A_j$. Because $[\mc{L}_{\mc{G}}]_{ji}=0$ for all $j\in\mc{V}\setminus \bar{\mc{N}}_i$, $A_jA_j^\dag=I$, and $\sum_{j\in\bar{\mc{N}}_i} [\mc{L}_{\mc{G}}]_{ji} = 0$, we have
\begin{equation}\label{eq:qiinsi}
    \mb{q}_i\in \mc{S}_i,~\forall i\in\mc{V}.
\end{equation}
To see $\sum_{i\in\mc{V}} \mb{q}_i=\mb{p}$, note that for each $j\in\mc{V}$,
\begin{equation*}
    \sum_{i\in\mc{V}} [\mb{q}_i]_j = v_j+A_j^\dag\sum_{i\in\bar{\mc{N}}_j} [\mc{L}_{\mc{G}}]_{ji}z_i = v_j+A_j^\dag y_j = v_j+u_j=p_j,
\end{equation*}
where $A_j^\dag y_j=u_j$ because $y_j=A_ju_j$ and $u_j\in\operatorname{Range}(A_j^T)$. Summarizing all the above gives \eqref{eq:subseteq}.

Next, we prove that \eqref{eq:couplingconstraintcommunication} satisfies \eqref{eq:identicalset}. Suppose $\mb{p}=\{p_{i\ell}\}_{\ell\in\mc{L}, i\in \mc{T}_\ell}$ is an arbitrary vector in $\operatorname{Null}(A)$. For any $\tilde{\ell}\in \mc{L}$, let $\mb{q}^{\tilde{\ell}}=\{q_{i\ell}^{\tilde{\ell}}\}_{\ell\in\mc{L}, i\in \mc{T}_\ell}$ be such that
\begin{equation*}
    q_{i\ell}^{\tilde{\ell}} = 
    \begin{cases}
        p_{i\ell}, & \ell = \tilde{\ell},\\
        0, & \text{otherwise}.
    \end{cases}
\end{equation*}
Then, $\mb{p}=\sum_{\tilde{\ell}\in\mc{L}} \mb{q}^{\tilde{\ell}}$. Moreover, by the definition of $\mc{S}_i$, $\mb{q}^{\tilde{\ell}}\in \mc{S}_i$ for all $i\in \mc{T}_{\tilde{\ell}}$. Hence, $\mb{p}\in \mc{S}_1+\ldots+\mc{S}_n$ and \eqref{eq:subseteq} holds. Completes the proof.

\subsection{Proof of Theorem \ref{thm:convergence}}\label{ssec:convergenceproof}
\subsubsection{Feasibility}
	We prove this by induction. Suppose that $\mb{x}^k$ is feasible to \eqref{eq:compactprob} for some $k\ge 0$, which holds naturally at $k=0$. Below, we show that $\mb{x}^{k+1}$ is feasible to \eqref{eq:compactprob}.
	

    By \eqref{eq:xupdate},
    \begin{equation}\label{eq:feasilocal}
        x_i^{k+1} = (1-\sum_{j\in\bar{\mathcal{N}}_i} \eta_j)x_i^k+\sum_{j\in\bar{\mathcal{N}}_i} \eta_j(x_i^k+p_{ji}^k),
    \end{equation}
    which, together with the feasibility of $\mathbf{x}^k$ to problem \eqref{eq:compactprob} and $\{p_{ij}^k\}_{j\in\bar{\mathcal{N}}_i}$ to problem \eqref{eq:pupdate}, ensures
    \begin{align}
        &x_i^k, x_i^k+p_{ji}^k \in\tilde{\mathcal{X}}_i,~\forall i\in\mathcal{V}, \forall j\in\bar{\mathcal{N}}_i,\label{eq:feasixkxkp}\\
    	&\sum_{i\in\mathcal{V}}A_ix_i^{k+1}=\sum_{i\in\mathcal{V}}A_i(x_i^k+\sum_{j\in\bar{\mc{N}}_i} \eta_j p_{ji}^k)\nonumber\\
    	=&\sum_{i\in\mathcal{V}}A_ix_i^k+\sum_{i\in\mathcal{V}}\eta_i\sum_{j\in\bar{\mathcal{N}}_i} A_jp_{ij}^k = c.\label{eq:feasieq}
    \end{align}
    Because $\eta_j=\frac{1}{\max_{\ell\in\bar{\mc{N}}_j} |\bar{\mc{N}}_\ell|}\le \frac{1}{|\bar{\mc{N}}_i|}$ for any $j\in\bar{\mathcal{N}}_i$, we have $1-\sum_{j\in\bar{\mathcal{N}}_i} \eta_j\ge 0$. Then from \eqref{eq:feasilocal}, \eqref{eq:feasixkxkp}, and the convexity of $\tilde{\mathcal{X}}_i$, we have $x_i^{k+1}\in\tilde{\mathcal{X}}_i$, which, together with \eqref{eq:feasieq}, yields the feasibility of $\mb{x}^{k+1}$ to problem \eqref{eq:compactprob}.

    Concluding all the above, all $\mb{x}^k$'s are feasible to \eqref{eq:compactprob}. Since all feasible solution of \eqref{eq:compactprob} is feasible to \eqref{eq:prob}, all $\mathbf{x}^k$'s are feasible to \eqref{eq:prob}.

\subsubsection{Non-convex case}

The proof consists of three parts. We first prove
\begin{equation}\label{eq:FksmallthanF0}
    F(\mathbf{x}^k)\le F(\mathbf{x}^0),~\forall k\ge 0.
\end{equation}
Next, we show that although each barrier function $B_i$ is not globally smooth, it is locally smooth. We derive the following using \eqref{eq:FksmallthanF0}: For all $i\in\mathcal{V}$ and $j\in\bar{\mc{N}}_i$,
\begin{equation}\label{eq:Bijresidualxik}
    \begin{split}
        &B_i(x_i^k)-B_i(x_i^k+p_{ji}^k)-\langle\nabla B_i(x_i^k+p_{ji}^k),-p_{ji}^k\rangle\\
        \ge &\frac{\|\nabla B_i(x_i^k)-\nabla B_i(x_i^k+p_{ji}^k)\|^2}{2L_B}.
    \end{split}
\end{equation}
Finally, we use \eqref{eq:Bijresidualxik} to show the result.

Define $F_i(x_i) = f_i(x_i)+\rho B_i(x_i)$ and $B_i^j(x_i) = \frac{1}{-g_i^j(x_i)}$.

\textbf{Part 1: proof of \eqref{eq:FksmallthanF0}.}
Define $\phi_i^k(x_i) = f_i^k(x_i)+\rho B_i(x_i)$ for each $i\in\mathcal{V}$ and $k\ge 0$. By the $L_i$-smoothness of each $f_i$, $f_i^k(x_i)\ge f_i(x_i)$ so that
\begin{align}
    & F_i(x_i)\le \phi_i^k(x_i),\quad \forall x_i\in\mathbb{R}^{d_i}.\label{eq:frhoBphi}
\end{align}
By \eqref{eq:frhoBphi}, the convexity of $\phi_i^k$, \eqref{eq:feasilocal}, and $\sum_{j\in\bar{\mathcal{N}}_i}\eta_j\le 1$,
\begin{equation}\label{eq:phikFk}
    \begin{split}
        &F(\mathbf{x}^{k+1})\le\sum_{i\in\mathcal{V}} \phi_i^k(x_i^{k+1})\\
        \le&\sum_{i\in\mathcal{V}}[(1-\!\!\sum_{j\in\bar{\mc{N}}_i} \eta_j)\phi_i^k(x_i^k)\!+\!\sum_{j\in\bar{\mc{N}}_i}\eta_j\phi_i^k(x_i^k\!+\!p_{ji}^k)].
    \end{split}
\end{equation}
Since $\{p_{ij}^k\}_{j\in\bar{\mathcal{N}}_i}$ is optimal to \eqref{eq:pupdate},
\begin{equation}\label{eq:sumphisumF}
    \sum_{j\in\bar{\mc{N}}_i} \phi_j^k(x_j^k+p_{ij}^k) \le \sum_{j\in\bar{\mc{N}}_i} \phi_j^k(x_j^k),
\end{equation}
and therefore
\begin{equation*}
\begin{split}
    &\sum_{i\in\mathcal{V}}\sum_{j\in\bar{\mc{N}}_i} \eta_j \phi_i^k(x_i^k+p_{ji}^k)=\sum_{i\in\mathcal{V}}\eta_i\sum_{j\in\bar{\mc{N}}_i}  \phi_j^k(x_j^k+p_{ij}^k)\\
    &\le\sum_{i\in\mathcal{V}}\eta_i\sum_{j\in\bar{\mc{N}}_i} \phi_j^k(x_j^k)=\sum_{i\in\mathcal{V}}\Big(\sum_{j\in\bar{\mc{N}}_i} \eta_j\Big) \phi_i^k(x_i^k).
\end{split}
\end{equation*}
Substituting the above equation into \eqref{eq:phikFk} and using $F_i(x_i^k)=\phi_i^k(x_i^k)$ $\forall i\in\mathcal{V}$ yields $F(\mathbf{x}^{k+1})\le \sum_{i\in\mathcal{V}} \phi_i^k(x_i^k)=F(\mathbf{x}^k)$ for all $k\ge 0$, which further implies \eqref{eq:FksmallthanF0}.

\textbf{Part 2: proof of \eqref{eq:Bijresidualxik}.} The proof starts from the local smoothness property of $B_i^j$.
\begin{lemma}\label{lemma:smooth}
    Suppose that $g_i^j$, $i\in\mc{V}$, $j\in\{1,\ldots,q_i\}$ satisfies Assumption \ref{asm:prob}(b). For any $\mathcal{M}>0$ and $x_i,y_i\in\tilde{\mathcal{X}}_i$, if $B_i^j(x_i)\le \mathcal{M}$ and $B_i^j(y_i)\le \mathcal{M}$, then
    \begin{equation}\label{eq:Bijresidual}
        \begin{split}
            &B_i^j(y_i) - B_i^j(x_i)-\langle \nabla B_i^j(x_i), y_i-x_i\rangle\\
            \ge &\frac{1}{8\beta_1^2\mathcal{M}^3+4\beta\mathcal{M}^2}\|\nabla B_i^j(y_i)-\nabla B_i^j(x_i)\|^2.
    \end{split}
\end{equation}
\end{lemma}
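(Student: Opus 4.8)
The target inequality \eqref{eq:Bijresidual} is just the familiar ``descent-lemma'' consequence $f(y)-f(x)-\langle\nabla f(x),y-x\rangle\ge\tfrac1{2L}\|\nabla f(y)-\nabla f(x)\|^2$ for a convex, $L$-smooth function, applied to $B_i^j$ with $L=4\beta_1^2\mathcal M^3+2\beta\mathcal M^2$ (observe the right-hand constant $1/(8\beta_1^2\mathcal M^3+4\beta\mathcal M^2)$ is exactly $\tfrac1{2L}$). Since $B_i^j$ blows up near $\partial\tilde{\mathcal X}_i$ and is not globally smooth, the real content is that on the sublevel set $\mathcal S_\mathcal M:=\{z\in\tilde{\mathcal X}_i:B_i^j(z)\le\mathcal M\}$ it behaves like a convex function with $L$-Lipschitz gradient. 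So the plan is: (i) convexity of $B_i^j$; (ii) a Lipschitz bound for $\nabla B_i^j$ on $\mathcal S_\mathcal M$; (iii) passage from (i)--(ii) to \eqref{eq:Bijresidual}.

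First I would record convexity: on $\tilde{\mathcal X}_i$ we have $g_i^j<0$, and $s\mapsto-1/s$ is convex and nondecreasing on $(-\infty,0)$, so composing it with the convex $g_i^j$ shows $B_i^j=-1/g_i^j$ is convex; in particular $\mathcal S_\mathcal M$ is a convex set, which is precisely what will legitimize the one-dimensional segment/minimization arguments in step (iii). For the Lipschitz bound, differentiating gives $\nabla B_i^j(z)=\nabla g_i^j(z)/g_i^j(z)^2$, and for $x,y\in\mathcal S_\mathcal M$ I would split
\begin{equation*}
\nabla B_i^j(x)-\nabla B_i^j(y)=\frac{\nabla g_i^j(x)-\nabla g_i^j(y)}{g_i^j(x)^2}+\nabla g_i^j(y)\Bigl(\tfrac1{g_i^j(x)^2}-\tfrac1{g_i^j(y)^2}\Bigr),
\end{equation*}
and bound the two terms using Assumption \ref{asm:prob}(b), namely $\|\nabla g_i^j(x)-\nabla g_i^j(y)\|\le\beta\|x-y\|$, $\|\nabla g_i^j\|\le\beta_1$, $|g_i^j(x)-g_i^j(y)|\le\beta_1\|x-y\|$, together with the key observation that on $\mathcal S_\mathcal M$ one has $1/|g_i^j(z)|=B_i^j(z)\le\mathcal M$, hence $1/g_i^j(z)^2\le\mathcal M^2$ and $1/|g_i^j(z)|^3\le\mathcal M^3$. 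A short computation then yields $\|\nabla B_i^j(x)-\nabla B_i^j(y)\|\le L\|x-y\|$ with $L=4\beta_1^2\mathcal M^3+2\beta\mathcal M^2$ (the estimate is deliberately a little loose, which is harmless since a larger $L$ only weakens \eqref{eq:Bijresidual}).

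For step (iii) I would use the standard argument: fixing $x$, set $\phi(z)=B_i^j(z)-\langle\nabla B_i^j(x),z\rangle$, which is convex, $L$-smooth and satisfies $\nabla\phi(x)=0$ so $x$ minimizes it; applying the descent lemma at $z^\star=y-\tfrac1L\nabla\phi(y)$ gives $\phi(x)\le\phi(z^\star)\le\phi(y)-\tfrac1{2L}\|\nabla\phi(y)\|^2$, and rearranging with $\nabla\phi(y)=\nabla B_i^j(y)-\nabla B_i^j(x)$ is exactly \eqref{eq:Bijresidual}.

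\textbf{Main obstacle.} The only delicate point is in step (iii): the auxiliary point $z^\star=y-\tfrac1L(\nabla B_i^j(y)-\nabla B_i^j(x))$ invoked by the descent lemma need not lie in $\mathcal S_\mathcal M$, where alone we have the $L$-Lipschitz estimate, and $B_i^j$ is not even defined outside $\tilde{\mathcal X}_i$. I would handle this by first extending $B_i^j|_{\mathcal S_\mathcal M}$ to a convex function on all of $\mathbb R^{d_i}$ with globally $L$-Lipschitz gradient — a standard fact for a convex function that is $L$-smooth on a convex set — running step (iii) for the extension, and noting it agrees with $B_i^j$ on $\mathcal S_\mathcal M$, which contains both $x$ and $y$; alternatively one can carry out step (iii) on an enlarged convex sublevel set $\mathcal S_{c\mathcal M}$ with $c>1$ (on which the $\beta_1$-Lipschitzness of $g_i^j$ still gives a uniform lower bound on $|g_i^j|$, hence on the distance to $\partial\tilde{\mathcal X}_i$) and absorb the slightly worse constant into the loose $L$ above. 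Everything else is routine; in particular, summing \eqref{eq:Bijresidual} over $j=1,\dots,q_i$ and applying $\sum_j\|a_j\|^2\ge\tfrac1{q_i}\|\sum_j a_j\|^2$ is what upgrades it to \eqref{eq:Bijresidualxik} with $L_B$ in place of $L$.
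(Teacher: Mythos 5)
Your steps (i) and (ii) are fine: $B_i^j=-1/g_i^j$ is indeed convex on $\tilde{\mathcal{X}}_i$ (composition of the convex nondecreasing $s\mapsto -1/s$ on $(-\infty,0)$ with the convex $g_i^j$), and the quotient-rule decomposition together with $1/|g_i^j|\le\mathcal{M}$ on the sublevel set does give $\|\nabla B_i^j(x_i)-\nabla B_i^j(y_i)\|\le(2\beta_1^2\mathcal{M}^3+\beta\mathcal{M}^2)\|x_i-y_i\|$ there. The genuine gap is in step (iii). The implication you rely on --- ``convex with $L$-Lipschitz gradient \emph{on a convex subset} implies the lower bound $f(y)-f(x)-\langle\nabla f(x),y-x\rangle\ge\frac{1}{2L}\|\nabla f(y)-\nabla f(x)\|^2$ \emph{for points of that subset}'' --- is not a theorem you can quote: the standard proof evaluates $\phi$ at $z^\star=y-\frac1L\nabla\phi(y)$, which sits at distance up to $\|x-y\|$ from $y$ and may leave the region where the Lipschitz estimate (or even the definition of $B_i^j$) is available. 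Neither of your repairs closes this. The convex $C^{1,1}$ extension route is circular: the Azagra--Mudarra-type theorem that produces a convex extension with $\operatorname{Lip}(\nabla F)\le L$ has as its \emph{hypothesis} exactly the inequality \eqref{eq:Bijresidual} you are trying to prove (local $L$-smoothness on a convex set alone is not known to suffice with a controlled constant). The enlarged-sublevel-set route fails quantitatively: the excursion of $z^\star$ from $y$ is of order $\|x-y\|$, which can be as large as the diameter of $\{B_i^j\le\mathcal{M}\}$, whereas the gap between $\{B_i^j\le\mathcal{M}\}$ and $\{B_i^j\le c\mathcal{M}\}$ is only of order $(c-1)/(c\mathcal{M}\beta_1)$; no fixed $c$ absorbable into your factor-of-two slack keeps $z^\star$ (let alone the segment to it) inside $\tilde{\mathcal{X}}_i$.

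The paper avoids this issue entirely by never invoking smoothness of $B_i^j$: it applies the global inequality $g_i^j(y_i)-g_i^j(x_i)-\langle\nabla g_i^j(x_i),y_i-x_i\rangle\ge\frac{1}{2\beta}\|\nabla g_i^j(y_i)-\nabla g_i^j(x_i)\|^2$ to $g_i^j$ itself, which is legitimate because Assumption \ref{asm:prob}(b) makes $g_i^j$ convex and $\beta$-smooth on all of $\mathbb{R}^{d_i}$, and then transfers this to $B_i^j$ by elementary algebra: it bounds $\|\nabla B_i^j(y_i)-\nabla B_i^j(x_i)\|^2$ by $2\beta_1^2u^2+2v^2/\bar g^4$ with $u=|1/(g_i^j(y_i))^2-1/(g_i^j(x_i))^2|$ and $v=\|\nabla g_i^j(y_i)-\nabla g_i^j(x_i)\|$, and separately shows that the Bregman residual $\Delta$ of $B_i^j$ dominates both $v^2/(2\beta\bar g^2)$ and a constant times $u^2$, using a case analysis on which of $|g_i^j(x_i)|,|g_i^j(y_i)|$ is larger. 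If you want to salvage your route you must either prove the local co-coercivity claim (at least for this specific $B_i^j$) or switch to working with $g_i^j$ as the paper does; your closing remark about summing over $j$ to obtain the $q_i$-dependent constant is consistent with the paper and needs no change.
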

\begin{proof}
Define $u=\big|\frac{1}{(g_i^j(y_i))^2}-\frac{1}{(g_i^j(x_i))^2}\big|$, $v=\|\nabla g_i^j(y_i)-\nabla g_i^j(x_i)\|$, and $\bar{g} = \max(|g_i^j(x_i)|,|g_i^j(y_i)|)$. Also define $\Delta = B_i^j(y_i)-B_i^j(x_i)-\langle \nabla B_i^j(x_i), y_i-x_i\rangle$. To derive \eqref{eq:Bijresidual}, we first bound $\|\nabla B_i^j(x_i)-\nabla B_i^j(y_i)\|^2$ by $u,v$ and then bound $u,v$ by $\Delta$.

Note that for any $c_1>c_2>0$ and $z_1,z_2\in\mathbb{R}^d$,
\begin{equation}\label{eq:zc}
\begin{split}
        \Big\|\frac{z_1}{c_1}-\frac{z_2}{c_2}\Big\| &=\Big\|\frac{z_1-z_2}{c_1}+\left(\frac{1}{c_1}-\frac{1}{c_2}\right)z_2\Big\|\\
    &\le \frac{\|z_1-z_2\|}{c_1}+\|z_2\|\cdot\Big|\frac{1}{c_1}-\frac{1}{c_2}\Big|.
\end{split}
\end{equation}
Then, by letting
\begin{equation*}
    (z_1, c_1)=\begin{cases}
        (\nabla g_i^j(x_i), (g_i^j(x_i))^2) & (g_i^j(x_i))^2\ge (g_i^j(y_i))^2,\\
        (\nabla g_i^j(y_i), (g_i^j(y_i))^2), & \text{otherwise}
    \end{cases}
\end{equation*}
in \eqref{eq:zc} and using the definition of $B_i^j$,
\begin{align}
    &\|\nabla B_i^j(y_i)-\nabla B_i^j(x_i)\|^2=\bigg\|\frac{\nabla g_i^j(y_i)}{(g_i^j(y_i))^2}-\frac{\nabla g_i^j(x_i)}{(g_i^j(x_i))^2}\bigg\|^2\nonumber\\
    \le& (v/\bar{g}^2+\beta_1u)^2\le 2\beta_1^2u^2+2v^2/\bar{g}^4\label{eq:graBijexpansion},
\end{align}
where the second step uses the $\beta_1$-Lipschitz continuity of $g_i^j$.

By the definition of $B_i^j$,
\begin{equation}\label{eq:deltaxy}
	\begin{split}
		\Delta=&\frac{1}{g_i^j(x_i)}\!-\!\frac{1}{g_i^j(y_i)}\!-\!\frac{(\nabla g_i^j(x_i))^T(y_i\!-\!x_i)}{(g_i^j(x_i))^2}\\
		=&\frac{g_i^j(y_i)\!-\!g_i^j(x_i)}{g_i^j(x_i)g_i^j(y_i)}\!-\!\frac{(\nabla g_i^j(x_i))^T(y_i\!-\!x_i)}{(g_i^j(x_i))^2}.
	\end{split}
\end{equation}
Since $g_i^j(x_i)<0$ and $g_i^j(y_i)<0$, if $g_i^j(y_i)\ge g_i^j(x_i)$, then $|g_i^j(y_i)|\le |g_i^j(x_i)|$ and
\begin{equation*}
	\frac{g_i^j(y_i)-g_i^j(x_i)}{g_i^j(x_i)g_i^j(y_i)} \ge \frac{g_i^j(y_i)-g_i^j(x_i)}{(g_i^j(x_i))^2}.
\end{equation*}
Otherwise, we have $|g_i^j(x_i)|\le |g_i^j(y_i)|$ and, due to the convexity of $g_i^j$, $\nabla g_i^j(x_i)^T(y_i-x_i)\le g_i^j(y_i)-g_i^j(x_i)\le 0$ and therefore
\begin{equation*}
	-\frac{(\nabla g_i^j(x_i)^T(y_i-x_i)}{(g_i^j(x_i))^2}\ge -\frac{(\nabla g_i^j(x_i))^T(y_i-x_i)}{g_i^j(x_i)g_i^j(y_i)}.
\end{equation*}
Summarizing the above two cases and using \eqref{eq:deltaxy}, we have
\begin{align}
	\Delta&\ge\frac{g_i^j(y_i)-g_i^j(x_i)-(\nabla g_i^j(x_i))^T(y_i-x_i)}{-g_i^j(x_i)\max\{|g_i^j(x_i)|,|g_i^j(y_i)|\}}\nonumber\\
	&\ge \frac{\|\nabla g_i^j(x_i)-\nabla g_i^j(y_i)\|^2}{2\beta\bar{g}^2} = \frac{v^2}{2\beta\bar{g}^2}\label{eq:deltav},
\end{align}
where the second step is due to $g_i^j(y_i)-g_i^j(x_i)-\langle\nabla g_i^j(x_i), y_i-x_i\rangle \ge \frac{1}{2\beta}\|\nabla g_i^j(x_i)-\nabla g_i^j(y_i)\|^2$ from the smoothness of $g_i^j$ and \cite[equation (2.1.7)]{Nesterov04}.

Substituting $(\nabla g_i^j(x_i))^T(y_i-x_i)\le g_i^j(y_i)-g_i^j(x_i)$ into the first step of \eqref{eq:deltaxy} ensures
\begin{equation}\label{eq:deltau}
	\begin{split}
		\Delta\ge& \frac{g_i^j(y_i)\!-\!g_i^j(x_i)}{g_i^j(x_i)}\left(\frac{1}{g_i^j(y_i)}\!-\!\frac{1}{g_i^j(x_i)}\right)\\
		=&\frac{-(g_i^j(x_i)g_i^j(y_i))^2g_i^j(y_i)}{(g_i^j(x_i)+g_i^j(y_i))^2}u^2\\
		\ge& \frac{|g_i^j(y_i)|\min\{(g_i^j(y_i))^2, (g_i^j(x_i))^2\}}{4}u^2,
	\end{split}
\end{equation}
where the last step is due to $(g_i^j(x_i)+g_i^j(y_i))^2\le 4\max\{(g_i^j(x_i))^2,(g_i^j(y_i))^2\}$ and $g_i^j(y_i)<0$. In addition, because $B_i^j(x_i), B_i^j(y_i)\le \mathcal{M}$,
\begin{equation}\label{eq:gineq}
    \bar{g} \ge \min\{|g_i^j(y_i)|, |g_i^j(x_i)|\} \ge \frac{1}{\mathcal{M}}.
\end{equation}
By \eqref{eq:graBijexpansion}, \eqref{eq:deltav}, \eqref{eq:deltau}, and \eqref{eq:gineq}, we obtain
\begin{equation*}
    \begin{split}
        &\|\nabla B_i^j(y_i)-\nabla B_i^j(x_i)\|^2\\
        \le& \frac{8\beta_1^2\Delta}{|g_i^j(y_i)|\min\{(g_i^j(y_i))^2, (g_i^j(x_i))^2\}}+\frac{4\beta\Delta}{\bar{g}^2}\\
        \le&(8\beta_1^2\mathcal{M}^3+4\beta\mathcal{M}^2)\Delta,
    \end{split}
\end{equation*}
i.e., \eqref{eq:Bijresidual} holds.
\end{proof}

Since $B_i(x_i)=\sum_{j=1}^{q_i} B_i^j(x_i)$, we have from \eqref{eq:Bijresidual} that \begin{equation}\label{eq:Biresidual}
    \begin{split}
        &B_i(y_i) - B_i(x_i)-\langle \nabla B_i(x_i), y_i-x_i\rangle\\
        \ge &\sum_{j=1}^{q_i} \frac{1}{8\beta_1^2\mathcal{M}^3+4\beta\mathcal{M}^2}\|\nabla B_i^j(y_i)-\nabla B_i^j(x_i)\|^2\\
        \ge &\frac{1}{(8\beta_1^2\mathcal{M}^3+4\beta\mathcal{M}^2)q_i}\|\nabla B_i(y_i)-\nabla B_i(x_i)\|^2.
    \end{split}
\end{equation}

To derive \eqref{eq:Bijresidualxik} using Lemma \ref{lemma:smooth}, we show that for any $i\in\mathcal{V}$,
\begin{align}
    & B_i(x_i^k)\le(F(\mathbf{x}^0)-f^\star)/\rho,\label{eq:BijxM}\\
    & B_j(x_j^k+p_{ij}^k)\le(F(\mathbf{x}^0)-f^\star)/\rho,~\forall j\in\bar{\mc{N}}_i.\label{eq:BijxpM}
\end{align}
Since $\mathbf{x}^k$ is feasible to problem \eqref{eq:prob}, $f(\mathbf{x}^k)\ge f^\star$ and $B_j(x_j^k)>0$ $\forall j\in\mathcal{V}$ which indicates $B(\mathbf{x}^k)=\sum_{j\in\mathcal{V}} B_j(x_j^k)\ge B_i(x_i^k)$. This, together with \eqref{eq:FksmallthanF0}, implies
\begin{equation*}
    B_i(x_i^k)\le B(\mathbf{x}^k) = \frac{F(\mathbf{x}^k)-f(\mathbf{x}^k)}{\rho}\le \frac{F(\mathbf{x}^0)-f^\star}{\rho},
\end{equation*}
i.e., \eqref{eq:BijxM} holds. By \eqref{eq:frhoBphi}, \eqref{eq:sumphisumF}, and $\phi_j^k(x_j^k)=F_j(x_j^k)$,
\begin{equation*}
\begin{split}
    F(\mathbf{x}^k)&=\sum_{j\in\mathcal{V}\setminus\bar{\mathcal{N}}_i}F_j(x_j^k)+\sum_{j\in\bar{\mathcal{N}}_i} \phi_j^k(x_j^k)\\
    &\ge\sum_{j\in\mathcal{V}\setminus\bar{\mathcal{N}}_i}F_j(x_j^k)+\sum_{j\in\bar{\mathcal{N}}_i} \phi_j^k(x_j^k+p_{ij}^k)\\    &\ge\sum_{j\in\mathcal{V}\setminus\bar{\mathcal{N}}_i}F_j(x_j^k)+\sum_{j\in\bar{\mathcal{N}}_i} F_j(x_j^k+p_{ij}^k)\\
    &\ge f^\star+\rho\sum_{j\in\mathcal{V}\setminus\bar{\mathcal{N}}_i}B_j(x_j^k)+\rho\sum_{j\in\bar{\mathcal{N}}_i} B_j(x_j^k+p_{ij}^k),
\end{split}
\end{equation*}
where the last step uses the feasibility of $\{z_j\}_{j\in\mathcal{V}}$ where $z_j=x_j^k+p_{ij}^k$ if $j\in\bar{\mathcal{N}}_i$ and $z_j=x_j^k$ otherwise. Combining the above equation with \eqref{eq:FksmallthanF0} and using $B_j(x_j^k)>0$ $\forall j\in\mathcal{V}\setminus\bar{\mathcal{N}}_i$ and  $B_j(x_j^k+p_{ij}^k)>0$ $\forall j\in\bar{\mathcal{N}}_i$, we obtain \eqref{eq:BijxpM}.

By \eqref{eq:BijxM}--\eqref{eq:BijxpM}, for any $i\in\mathcal{V}$, $j\in\bar{\mathcal{N}}_i$, and $\ell\in\{1,\ldots,q_i\}$, $B_i^\ell(x_i^k)\le B_i(x_i^k)\le(F(\mathbf{x}^0)-f^\star)/\rho$ and $B_i^\ell(x_i^k+p_{ji}^k)\le (F(\mathbf{x}^0)-f^\star)/\rho$. Then, by \eqref{eq:Biresidual} with $\mc{M}=(F(\mb{x}^0)-f^\star)/\rho$, \eqref{eq:Bijresidualxik} holds.

\textbf{Part 3: descent of objective value.} The proof in this part includes two steps. Step 1 proves
\begin{equation}\label{eq:Fxk1minusFk}
	\begin{split}
		&F(\mb{x}^{k+1})-F(\mb{x}^k)\\
		\le\!&-\!\frac{\sum_{i\in\mc{V}}\eta_i\sum_{j\in\bar{\mc{N}}_i}\|\nabla \phi_j^k(x_j^k)\!-\!\nabla \phi_j^k(x_j^k+p_{ij}^k)\|^2}{2(L+\rho L_B)}.
	\end{split}
\end{equation}
and step 2 derives
\begin{equation}\label{eq:Wweight}
    \sum_{i\in\mc{V}}\eta_i\sum_{j\in\bar{\mc{N}}_i}\|\nabla \phi_j^k(x_j^k)\!-\!\nabla \phi_j^k(x_j^k+p_{ij}^k)\|^2\ge \|\nabla F(\mathbf{x}^k)\|_W^2.
\end{equation}
From \eqref{eq:Fxk1minusFk} and \eqref{eq:Wweight},
\begin{equation}\label{eq:convexsuccessivedescent}
	\begin{split}
		F(\mb{x}^{k+1})-F(\mb{x}^k)\le&-\frac{\|\nabla F(\mb{x}^k)\|_W^2}{2(L+\rho L_B)},
	\end{split}
\end{equation}
which further leads to \eqref{eq:theononconvex}.

\textbf{Step 1}: By $\phi_i^k(x_i^k)=F_i(x_i^k)$ and \eqref{eq:phikFk}, we obtain
\begin{equation}\label{eq:fplusrhoBexpansion}
	\begin{split}
		&F(\mb{x}^{k+1})-F(\mb{x}^k)\\
		\le&\sum_{i\in\mc{V}}\eta_i\sum_{j\in\bar{\mc{N}}_i} (\phi_j^k(x_j^k+p_{ij}^k)-\phi_j^k(x_j^k)).
	\end{split}
\end{equation}
By the $L$-smoothness of $f_j^k$ and \cite[equation (2.1.7)]{Nesterov04},
\begin{equation*}
	\begin{split}
		&f_j^k(x_j^k+p_{ij}^k)\!-\!f_j^k(x_j^k)\le\langle\nabla f_j^k(x_j^k\!+\!p_{ij}^k), p_{ij}^k\rangle\\
		&-\frac{\|\nabla f_j^k(x_j^k)\!-\!\nabla f_j^k(x_j^k\!+\!p_{ij}^k)\|^2}{2L},
	\end{split}
\end{equation*}
which, together with \eqref{eq:Bijresidualxik}, yields
\begin{equation}\label{eq:BiLipachitzinproof}
	\begin{split}
	    &\phi_j^k(x_j^k\!+\!p_{ij}^k)\!-\!\phi_j^k(x_j^k)\!\le\!\!-\frac{\rho\|\nabla B_j(x_j^k)\!-\!\nabla B_j(x_j^k\!+\!p_{ij}^k)\|^2}{2L_B}\\
		&-\frac{\|\nabla f_j^k(x_j^k)\!-\!\nabla f_j^k(x_j^k\!+\!p_{ij}^k)\|^2}{2L}+\langle\nabla \phi_j^k(x_j^k\!+\!p_{ij}^k),p_{ij}^k\rangle\\
		&\le\! -\frac{\|\nabla \phi_j^k(x_j^k)\!-\!\nabla \phi_j^k(x_j^k\!+\!p_{ij}^k)\|^2}{2(L+\rho L_B)}\!+\!\langle\nabla \phi_j^k(x_j^k\!+\!p_{ij}^k),p_{ij}^k\rangle.
	\end{split}
\end{equation}
In the above equation, the last step is due to the fact: For any $\mb{x},\mb{y}\in \mbb{R}^N$ and $b_1,b_2>0$,
\begin{equation*}
	\frac{\|\mb{x}\|^2}{b_1}+\frac{\|\mb{y}\|^2}{b_2}\ge\frac{\|\mb{x}+\mb{y}\|^2}{b_1+b_2}.
\end{equation*}
Since $\{p_{ij}^k\}_{j\in \bar{\mc{N}}_i}$ is optimal to problem \eqref{eq:pupdate} and $\td{\mc{X}}_j$ $\forall j\in\bar{\mc{N}}_i$ are open, by the KKT condition of problem \eqref{eq:pupdate}, there exists $v_i^k\in\mbb{R}^m$ such that
\begin{equation}\label{eq:localoptcond}
	\nabla \phi_j^k(x_j^k+p_{ij}^k) = A_j^Tv_i^k,~\forall j\in\bar{\mc{N}}_i.
\end{equation}
In addition, problem \eqref{eq:pupdate} requires $\sum_{j\in\bar{\mc{N}}_i} A_jp_{ij}^k = \mb{0}$. Then,
\begin{equation}\label{eq:summationphip}
	\sum_{j\in\bar{\mc{N}}_i}\langle \nabla \phi_j^k(x_j^k+p_{ij}^k), p_{ij}^k\rangle = \langle v_i^k, \sum_{j\in\bar{\mc{N}}_i} A_jp_{ij}^k\rangle = 0.
\end{equation}
By substituting \eqref{eq:summationphip} into \eqref{eq:BiLipachitzinproof}, we obtain
\begin{equation}\label{eq:successivedescent}
	\begin{split}
		&\sum_{j\in\bar{\mathcal{N}}_i}(\phi_j^k(x_j^k+p_{ij}^k)-\phi_j^k(x_j^k))\\
		\le&-\sum_{j\in\bar{\mc{N}}_i}\frac{\|\nabla \phi_j^k(x_j^k)\!-\!\nabla \phi_j^k(x_j^k\!+\!p_{ij}^k)\|^2}{2(L\!+\!\rho L_B))}.
	\end{split}
\end{equation}
Substituting \eqref{eq:successivedescent} into \eqref{eq:fplusrhoBexpansion} gives 
\eqref{eq:Fxk1minusFk}.

\textbf{Step 2:} Fix $i\in\mc{V}$, let $\mc{S}_i^{\perp}$ be the orthogonal complement of $\mc{S}_i$, and define $\mb{z}^k=[(z_1^k)^T, \ldots, (z_n^k)^T]^T$ where
\begin{equation}
    z_j^k = \begin{cases}
        \nabla \phi_j^k(x_j^k+p_{ij}^k), & j\in\bar{\mc{N}}_i,\\
        \nabla F_j(x_j^k), & \text{otherwise}.
    \end{cases}
\end{equation}
Because of \eqref{eq:localoptcond}, $\mb{z}^k\in\mc{S}_i^\perp$. In addition, $\nabla \phi_j^k(x_j^k)=\nabla F_j(x_j^k)$. Then,
\begin{equation*}
	\begin{split}
	    &\sum_{j\in\bar{\mc{N}}_i}\|\nabla \phi_j^k(x_j^k)-\nabla \phi_j^k(x_j^k+p_{ij}^k)\|^2\\
		=&\|\nabla F(\mb{x}^k)-\mb{z}^k\|^2\ge \|P_i\nabla F(\mb{x}^k)\|^2 = \|\nabla F(\mb{x}^k)\|_{P_i}^2.
	\end{split}
\end{equation*}
As a result, \eqref{eq:Wweight} holds.

\subsubsection{Convex case}\label{sssec:prooftheoconvex}

The optimal solution to problem \eqref{eq:compactprob} exists because of the compactness of $\mathcal{C}$ and the convexity of $F$.

By $\operatorname{Null}(A)=\operatorname{Range}(W)$ and $W=W^T\succeq \mb{O}$, we have $\operatorname{Null}(A)=\operatorname{Range}(W^{\frac{1}{2}})$. Suppose that $\tilde{\mb{x}}^\star$ is an optimal solution to problem \eqref{eq:compactprob}. Since $A\mb{x}^k=A\tilde{\mb{x}}^\star = c$, we have
\begin{equation}\label{eq:xkxstarrange}
	\mb{x}^k-\tilde{\mb{x}}^\star\in \operatorname{Null}(A)=\operatorname{Range}(W^{\frac{1}{2}}).
\end{equation}
By the convexity of $F(\mb{x})$ and \eqref{eq:xkxstarrange},
\begin{equation*}
	\begin{split}
		&F(\mb{x}^k)-F(\tilde{\mb{x}}^\star)\le\langle \nabla F(\mb{x}^k), \mb{x}^k-\tilde{\mb{x}}^\star\rangle\\
		=& \langle W^{\frac{1}{2}}\nabla F(\mb{x}^k), (W^{\frac{1}{2}})^\dag(\mb{x}^k\!-\!\tilde{\mb{x}}^\star)\rangle\!\le\!\frac{R}{\sqrt{\lambda_W}}\|\nabla F(\mb{x}^k)\|_W.
	\end{split}
\end{equation*}
By the above equation and \eqref{eq:convexsuccessivedescent} and using \cite[Theorem 2.1.14]{Nesterov04}, we obtain \eqref{eq:theoconvex}.

\subsubsection{Strongly convex}\label{sssec:prooftheostronlyconvex}

Since all the $f_i$'s are $\sigma-$strongly convex, so is $F$ and therefore,
\begin{equation}\label{eq:sc}
	\begin{split}
		&F(\tilde{\mb{x}}^\star)-F(\mb{x}^k)\\
		\ge&\langle \nabla F(\mb{x}^k), \tilde{\mb{x}}^\star-\mb{x}^k\rangle+\frac{\sigma}{2}\|\tilde{\mb{x}}^\star-\mb{x}^k\|^2.
	\end{split}
\end{equation}
Since $\mb{x}^k-\tilde{\mb{x}}^\star\in \operatorname{Range}(W)$ by \eqref{eq:xkxstarrange},
\begin{equation}\label{eq:WpseudoW}
	\begin{split}
		\langle\nabla F(\mb{x}^k), \mb{x}^k\!-\!\tilde{\mb{x}}^\star\rangle\!\!=&\langle \nabla F(\mb{x}^k), (WW^\dag)(\mb{x}^k\!-\!\tilde{\mb{x}}^\star)\rangle\\
		=&\langle (WW^\dag)\nabla F(\mb{x}^k), \mb{x}^k\!-\!\tilde{\mb{x}}^\star\rangle.
	\end{split}
\end{equation}
Let $\mb{a}=(WW^\dag)\nabla F(\mb{x}^k)$ and $\mb{b}=\mb{x}^k-\tilde{\mb{x}}^\star$. We have
\begin{equation*}
	\begin{split}
		\langle \mb{a},\mb{b}\rangle+\frac{\sigma}{2}\|\mb{b}\|^2 &= \frac{1}{2}\left\|\frac{\mb{a}}{\sqrt{\sigma}}+\sqrt{\sigma}\mb{b}\right\|^2-\frac{\|\mb{a}\|^2}{2\sigma}\ge -\frac{\|\mb{a}\|^2}{2\sigma},
	\end{split}
\end{equation*}
which, together with \eqref{eq:sc} and \eqref{eq:WpseudoW}, yields
\begin{equation*}
	F(\mb{x}^k)\!-\!F(\tilde{\mb{x}}^\star)\!\le\!\frac{\|(WW^\dag)\nabla F(\mb{x}^k)\|^2}{2\sigma}\!\le\!\frac{\|\nabla F(\mb{x}^k)\|_W^2}{2\sigma\lambda_W}.
\end{equation*}
Combining the above equation with \eqref{eq:convexsuccessivedescent} gives
\begin{equation*}
	F(\mb{x}^{k+1})\!-\!F(\tilde{\mb{x}}^\star)\!\le\! (1\!-\!\frac{\sigma\lambda_W}{L\!+\!\rho L_B})(F(\mb{x}^k)\!-\!F(\tilde{\mb{x}}^\star)),
\end{equation*}
which further yields the result.

\subsection{Proof of Lemma \ref{lemma:optimalitygapconvex}}\label{ssec:proofofoptimalitygap}
Let $\mathbf{x}^\star$ and $\tilde{\mathbf{x}}^\star$ be the optimal solution of problems \eqref{eq:prob} and \eqref{eq:compactprob}, respectively. We first assume $f(\mathbf{x}')-\underline{f}\le \frac{\epsilon}{2}$ and $\rho\le \frac{\epsilon}{2B(\mathbf{x}')}$. Since $f^\star\ge \underline{f}$ and $f(\tilde{\mathbf{x}}^\star)\le F(\tilde{\mathbf{x}}^\star)\le F(\mathbf{x}')=f(\mathbf{x}')+\rho B(\mathbf{x}')$,
\begin{equation*}
    \begin{split}
        f(\tilde{\mathbf{x}}^\star)-f^\star\le f(\mathbf{x}')-\underline{f}+\rho B(\mathbf{x}')\le \epsilon.
    \end{split}
\end{equation*}

Next, we consider the case where $f(\mathbf{x}')-\underline{f}>\frac{\epsilon}{2}$ and $\rho\le \frac{\epsilon^2}{4(f(\mb{x}')-\underline{f})B(\mathbf{x}')}$. Let $\alpha = \frac{\epsilon}{2(f(\mb{x}')-\underline{f})}\in (0,1)$ and $\mb{x}^{\alpha}=\alpha\mb{x}'+(1-\alpha)\mb{x}^\star$. Clearly, $\mb{x}^\alpha$ is feasible to \eqref{eq:compactprob}. Hence, 
\begin{equation}\label{eq:gap}
    f(\tilde{\mathbf{x}}^\star)\le F(\tilde{\mathbf{x}}^\star)\le f(\mathbf{x}^\alpha)+\rho B(\mathbf{x}^\alpha).
\end{equation}
By the convexity of $f$,
\begin{equation}\label{eq:convexf}
	f(\mb{x}^\alpha)-f^\star \le \alpha(f(\mb{x}')-f^\star)\le \epsilon/2.
\end{equation}
In addition, by the definition of $B$,
\begin{equation}\label{eq:graBinverse}
	B(\mb{x}^\alpha)=\sum_{i\in\mc{V}}\sum_{j=1}^{q_i}\frac{1}{-g_i^j(x_i^\alpha)}.
\end{equation}
Since $\mb{x}^\star$ and $\mb{x}'$ are feasible to problems \eqref{eq:prob} and \eqref{eq:compactprob}, respectively, we have $g_i^j(x_i^\star)\le 0$ and $g_i^j(x_i')<0$, which, together with the convexity of $g_i^j$, gives
\begin{equation}\label{eq:gijupperbound}
	g_i^j(x_i^\alpha)\le \alpha g_i^j(x_i')+(1-\alpha)g_i^j(x_i^\star)\le \alpha g_i^j(x_i')<0.
\end{equation}
Substituting \eqref{eq:gijupperbound} into \eqref{eq:graBinverse} gives
\begin{equation*}
	\begin{split}
		B(\mb{x}^\alpha)\le \sum_{i\in\mc{V}}\sum_{j=1}^{q_i}\frac{1}{-\alpha g_i^j(x_i')} = \frac{B(\mb{x}')}{\alpha} = \frac{2(f(\mb{x}')-\underline{f})B(\mathbf{x}')}{\epsilon}.
	\end{split}
\end{equation*}
Hence, $\rho B(\mathbf{x}^\alpha)\le \epsilon/2$, which, together with \eqref{eq:convexf} and \eqref{eq:gap}, ensures $f(\tilde{\mb{x}}^\star)-f^\star\le \epsilon$.

\bibliographystyle{plain} 
\bibliography{reference}

\end{document}